\numberwithin{equation}{section}
\newtheorem{theorem}{Theorem}[section]
\newtheorem{proposition}[theorem]{Proposition}
\newtheorem{lemma}[theorem]{Lemma}
\newtheorem{corollary}[theorem]{Corollary}
\theoremstyle{definition}
\newtheorem{definition}[theorem]{Definition}
\theoremstyle{remark}
\newtheorem{numberedexample}[theorem]{Example}
\newtheorem*{remark}{Remark}
\renewcommand{\k}{\Bbbk}
\renewcommand{\O}{\mathcal{O}}
\renewcommand{\P}{\mathbb{P}}
\newcommand{\Q}{\mathbb{Q}}
\newcommand{\Spec}{\operatorname{Spec}\,}
\newcommand{\Hom}{\operatorname{Hom}}
\newcommand{\RHom}{\operatorname{RHom}}
\newcommand{\Pic}{\operatorname{Pic}}
\newcommand{\Dbcoh}{D^b_{\!\mathrm{coh}}}
\newcommand{\Perf}{\operatorname{Perf}}
\newcommand{\Dqcoh}{D_{\operatorname{QCoh}}}
\newcommand{\mA}{\mathcal{A}}
\newcommand{\mB}{\mathcal{B}}
\newcommand{\mC}{\mathcal{C}}
\newcommand{\dual}{{\scriptstyle\vee}}
\newcommand{\iso}{\simeq}
\newcommand{\caniso}{\cong}
\newcommand{\monoarrow}{\hookrightarrow}
\newcommand{\im}{\operatorname{im}\,}
\newcommand{\supp}{\operatorname{supp}}
\newcommand{\lra}{\longrightarrow}
\title{Stably semiorthogonally indecomposable varieties}
\author{Dmitrii Pirozhkov}
\address{Institut de Mathématiques de Jussieu - Paris Rive Gauche (IMJ-PRG), 
4, place Jussieu, 75252 Paris Cedex 05, France}
\email{Pirozhkov@imj-prg.fr}
\begin{document}


\maketitle

\begin{prelims}

\DisplayAbstractInEnglish

\bigskip

\DisplayKeyWords

\medskip

\DisplayMSCclass

\end{prelims}


\newpage

\setcounter{tocdepth}{1}

\tableofcontents


\section{Introduction}

The derived category of coherent sheaves on an algebraic variety is an interesting but complicated invariant. It is a triangulated category, and sometimes it can be built out of smaller triangulated categories using the notion of a semiorthogonal decomposition. One basic question is to identify which smooth proper varieties have indecomposable derived categories, \textit{i.e.}, admit no nontrivial semiorthogonal decompositions. Examples of such varieties are Calabi--Yau varieties, see \cite{bridgeland}, curves of positive genus, see \cite{okawa}, or more generally varieties with a globally generated canonical bundle, see \cite{kawatani-okawa}.

We propose a stronger notion of indecomposability for derived categories of algebraic varieties. Let $Y$ be an algebraic variety over a field $\k$. Our definition is a constraint on possible semiorthogonal decompositions for categories equipped with the action of the symmetric monoidal category $\Perf(Y)$ of perfect complexes on~$Y$. We work in the formalism of stable $\infty$-categories as in~\cite{higheralgebra}, but we mostly follow the exposition given in~\cite{noncommutativehpd}.

Roughly speaking, a $\Perf(Y)$-linear structure on a $\k$-linear stable $\infty$-category $\mathfrak{D}$ consists of an \emph{action functor} $a_{\mathfrak{D}}\colon \mathfrak{D} \otimes_\k \Perf(Y) \to \mathfrak{D}$ which is associative up to an isomorphism, together with higher associativity data. When there is no risk of confusion, for objects~$D \in \mathfrak{D}$ and~$S \in \Perf(Y)$, we denote the result of the action $a_{\mathfrak{D}}(D \otimes S) \in \mathfrak{D}$ by $D \cdot S$. For the duration of this paper, the term \emph{$\Perf(Y)$-linear category} means a $\k$-linear stable $\infty$-category with the~$\Perf(Y)$-linear structure as above. A basic example of a $\Perf(Y)$-linear category comes from geometry: if $f\colon X \to Y$ is a morphism of schemes, then $\Perf(X)$ is a $\Perf(Y)$-linear category with the action $D \cdot S := D \otimes_X f^*S$.

We need a couple of preliminary definitions.

\begin{definition}
  \label{def: closed under action}
  Let $Y$ be a scheme over the field $\k$. Let $\mathfrak{D}$ be a $\Perf(Y)$-linear category, and let $\mA \subset \mathfrak{D}$ be a strictly full triangulated $\k$-linear subcategory. For a subset $\mathcal{S} \subset \Perf(Y)$, we say that \emph{$\mA$ is closed under the action of $\mathcal{S}$} if for any object $S \in \mathcal{S}$ and any object $A \in \mA$, the result of the action $A \cdot S$ in $\mathfrak{D}$ is an object in the subcategory $\mA$.
\end{definition}

\begin{definition}[\textit{cf.}~\cite{bond-kapr}]
  Let $T$ be a triangulated category. A strictly full triangulated subcategory $\mA \subset T$ is a \emph{left admissible subcategory} if the inclusion functor $\mA \monoarrow T$ has a left adjoint functor.
\end{definition}

Now we can state the main definition of this paper. For the notion of a linear category proper over the base, see \cite[Lemma~4.7]{noncommutativehpd}. As an example, if $f\colon X \to Y$ is a proper perfect morphism of schemes, then $\Perf(X)$ is a proper $\Perf(Y)$-linear category by \cite[Lemma~4.9]{noncommutativehpd}.

\begin{definition}
  \label{def: nssi}
  Let $Y$ be a scheme over the field $\k$. We say that $Y$ is \emph{noncommutatively stably semiorthogonally indecomposable}, or NSSI for brevity, if for arbitrary choices of
  \begin{enumerate}
  \item $\mathfrak{D}$, a $\Perf(Y)$-linear category which is proper over $Y$ and has a classical generator,
  \item $\mA$, a left admissible subcategory of $\mathfrak{D}$, 
  \end{enumerate}
  the subcategory $\mA$ is closed under the action of $\Perf(Y)$ on $\mathfrak{D}$ in the sense of Definition~\ref{def: closed under action}.
\end{definition}

\begin{remark}
  The same definition may be given for an arbitrary symmetric monoidal stable $\infty$-category, not necessarily of the form $(\Perf(Y), \otimes)$. The author does not know whether this would be useful. Note that the variety $Y$ is completely determined by the monoidal structure on $\Perf(Y)$, see  \cite{balmer-reconstruction}, so the definition above is a property of the variety $Y$, not of something weaker.
\end{remark}

When $T$ is a triangulated category of perfect complexes on a scheme, a subcategory $\mA \subset T$ is left admissible if and only if there exists a semiorthogonal decomposition of $T$ which has $\mA$ as the first component; see \cite{bond-kapr}. Using this relation, in Lemma~\ref{lem: sanity check} we confirm that the condition in Definition~\ref{def: nssi} implies the indecomposability in the usual sense; \textit{i.e.}, all semiorthogonal decompositions of $\Perf(Y)$ are trivial. We also show in Example~\ref{ex: not stably indecomposable} that stable indecomposability is a strictly stronger condition than indecomposability. In particular, it implies that the derived category of sheaves on any smooth proper subvariety is also indecomposable.

The main results of this paper are the following two theorems.

\begin{theorem}[{ = Theorem~\textup{\ref{impl: main theorem}}}]
  \label{thm: main theorem}
  Let $Y$ be a scheme over a field $\k$. If\, $Y$ admits a finite\footnote{Previous version of the paper used ``affine'' here. See Erratum at the end.} morphism to an abelian variety over $\k$, then $Y$ is NSSI.
\end{theorem}

\begin{theorem}[{ = Theorem~\textup{\ref{impl: family of nssi varieties}}}]
  \label{thm: family of nssi varieties}
  Let $\pi\colon Y \to B$ be a flat proper morphism of quasi-compact separated schemes over a field $\k$. Assume that~$B$ is a regular\footnote{Previous version of the paper omitted this hypothesis. See Erratum at the end.} NSSI scheme and that for any closed point~$b \in B$, the fiber~$Y_b := \pi^{-1}(b)$ is a NSSI scheme.
  Then~$Y$ is NSSI.
\end{theorem}

We use these results to deduce the nonexistence of phantom subcategories in some varieties. Recall that a nonzero admissible subcategory $\mA \subset T$ of a triangulated category $T$ is called a \emph{phantom subcategory} if the class of any object of $\mA$ in the Grothendieck group $K_0(T)$ is zero. In general, such subcategories exist, but it is expected that they should not appear in many simple cases.

\begin{proposition}[{ = Proposition~\textup{\ref{impl: no phantoms}}}]
  \label{prop: no phantoms}
  Let $\k$ be an algebraically closed field of characteristic zero. Let $Y$ be a smooth projective variety over $\k$ which is NSSI.
  \begin{enumerate}
  \item Let~$X$ be either the projective line~$\P^1$ or a del Pezzo surface. Then there are no phantom subcategories in the derived category $\Dbcoh(X \times Y)$.
  \item Let~$\pi\colon \mathfrak{X} \to Y$ be an \'etale-locally trivial fibration with fiber~$\P^1$ or~$\P^2$. Then there are no phantom subcategories in the derived category~$\Dbcoh(\mathfrak{X})$.
  \end{enumerate}
\end{proposition}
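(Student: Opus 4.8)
The plan is to treat a hypothetical phantom $\mA$ in the same way in both parts: since a phantom is by definition a nonzero admissible subcategory, it is in particular left-admissible, and the ambient category carries a natural $\Perf(Y)$-linear structure. In part~(1) this is $\Dbcoh(X\times Y)$, linear over $Y$ via the projection $p_Y$, and in part~(2) it is $\Dbcoh(\mathfrak X)$, linear over $Y$ via $\pi$; in both cases the category is proper over $Y$ and admits a classical generator because $Y$ and the fibers are smooth and projective. The very first step is therefore to invoke the hypothesis that $Y$ is NSSI (Definition~\ref{def: nssi}) to conclude that $\mA$ is closed under the action of $\Perf(Y)$. Everything after this is an analysis of what such $\Perf(Y)$-stable admissible subcategories can look like, combined with the observation that a phantom must have $K_0(\mA)\to K_0$ equal to zero.

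For part~(1) I would feed this into the product statement advertised in the abstract, namely that for $Y$ NSSI every semiorthogonal decomposition of $X\times Y$ is induced from one of $X$, in order to write $\mA \iso \mB \boxtimes \Dbcoh(Y)$ for an admissible subcategory $\mB \subseteq \Dbcoh(X)$. The phantom condition then descends to $\mB$: choosing any $\k$-point $y\in Y$ (which exists since $Y$ is projective over an algebraically closed field), the restriction $i_y^*$ splits $p_X^*$, so $p_X^*\colon K_0(X)\to K_0(X\times Y)$ is injective, and since $[B\boxtimes\O_Y]=p_X^*[B]$ the vanishing of the classes in $\mA$ forces $[B]=0$ for every $B\in\mB$. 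Thus $\mB$ is either zero or a phantom in $\Dbcoh(X)$, and it remains to know that $X=\P^1$ or a del Pezzo surface has no phantom subcategories. Both have full exceptional collections, so $\dim_\k HH_*(\Dbcoh(X))$ equals the length of the collection, which equals $\rk K_0(X)$, and the Euler form is unimodular; additivity of Hochschild homology along the decomposition $\Dbcoh(X)=\langle \mB^\perp,\mB\rangle$, together with the inequality $\rk K_0^{\mathrm{num}}(\mB^\perp)\le \dim_\k HH_*(\mB^\perp)$ coming from the Hochschild Chern character factoring the Euler form, then forces $HH_*(\mB)=0$, hence $\mB=0$ since a nonzero smooth and proper category has nonvanishing Hochschild homology. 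This gives $\mA=0$, a contradiction. (In characteristic zero the Hochschild homology computation is the Hochschild--Kostant--Rosenberg decomposition, which is where the hypothesis on $\k$ enters.)

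For part~(2) the variety $\mathfrak X$ is a Severi--Brauer scheme over $Y$ of relative dimension $n\in\{1,2\}$, and the relevant input is the relative exceptional structure: there is a $\Perf(Y)$-linear semiorthogonal decomposition $\Dbcoh(\mathfrak X)=\langle \Dbcoh(Y),\Dbcoh(Y,\alpha),\dots,\Dbcoh(Y,\alpha^n)\rangle$ into twisted derived categories, where $\alpha\in \operatorname{Br}(Y)$ is the Brauer class of the fibration. I would argue that the $\Perf(Y)$-stable admissible subcategory $\mA$ is compatible with this decomposition, so that it is assembled from admissible $\Perf(Y)$-linear subcategories $\mA_j\subseteq \Dbcoh(Y,\alpha^j)$. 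Each twisted category $\Dbcoh(Y,\alpha^j)$ is again a $\Perf(Y)$-linear category that is proper over $Y$ and has a classical generator, so applying the NSSI hypothesis to it shows that each $\mA_j$ is closed under the $\Perf(Y)$-action, i.e.\ is a thick tensor ideal. By the Thomason--Balmer classification a nonzero thick tensor ideal contains the structure sheaf $\O_y$ of some closed point $y$, and $[\O_y]\ne 0$ in $K_0$ because the Euler pairing gives $\chi(\O_Y,\O_y)=1$; hence a nonzero $\mA_j$ cannot be a phantom. Therefore every $\mA_j=0$ and $\mA=0$.

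The main obstacle is the descent in part~(2): one must construct the $\Perf(Y)$-linear relative semiorthogonal decomposition into twisted categories in the \'etale-locally trivial (rather than Zariski-locally trivial) setting, verify that these twisted categories meet all the hypotheses required to apply Definition~\ref{def: nssi}, and --- most delicately --- prove that a $\Perf(Y)$-stable admissible subcategory really does split into admissible pieces along this decomposition rather than interacting with it through mutations. I expect the product statement used in part~(1) to be the clean packaging of the same phenomenon in the untwisted, globally split case, so that the technical heart of the Proposition is extending that compatibility to the twisted relative situation.
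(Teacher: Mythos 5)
Your part (1) follows the paper's route up to the last step: both arguments write $\mA = \mA_X \boxtimes \Dbcoh(Y)$ using the NSSI hypothesis, use injectivity of $p_X^*$ on $K_0$ to descend the phantom condition to $\mA_X \subset \Dbcoh(X)$, and then need the absence of phantoms in $\Dbcoh(X)$. But your justification of that last input is broken. The Hochschild homology additivity argument only shows that a phantom $\mB$ in a del Pezzo surface would satisfy $HH_*(\mB)=0$ in addition to $K_0(\mB)=0$; your concluding claim that ``a nonzero smooth and proper category has nonvanishing Hochschild homology'' is false --- phantom and quasi-phantom admissible subcategories (Gorchinskiy--Orlov, Böhning--Graf von Bothmer--Katzarkov--Sosna, etc.) are precisely nonzero smooth proper categories with $HH_*=0$, and their existence is why the question is nontrivial in the first place. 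The non-existence of phantoms in del Pezzo surfaces is a genuinely hard theorem, which the paper imports as a citation rather than deriving from additivity of $HH_*$. (For $X=\P^1$ your argument can be replaced by the elementary classification of admissible subcategories of $\Dbcoh(\P^1)$, as the paper does.)

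Part (2) takes a genuinely different route from the paper, and the step you yourself flag as delicate is a real gap, not a technicality. You assume that a $\Perf(Y)$-stable admissible subcategory $\mA\subset\Dbcoh(\mathfrak{X})$ decomposes into pieces $\mA_j$ along the Bernardara-type decomposition into twisted categories $\Dbcoh(Y,\alpha^j)$; admissible subcategories are in general not compatible with a given semiorthogonal decomposition, and nothing in the NSSI hypothesis forces this compatibility. The subsequent appeal to the Thomason--Balmer classification is also off: $\mA_j$ lives in a twisted category, which is a $\Perf(Y)$-module category rather than a tensor ideal of $\Perf(Y)$, so ``contains $\O_y$'' does not typecheck, and in any case NSSI only gives closure under the action, not a dichotomy between $0$ and everything. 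The paper avoids all of this: it uses the Bayer--Okawa--et al.\ moduli space of semiorthogonal decompositions to show that a $Y$-linear admissible $\mA$ is determined by its restriction $\mA_X$ to one fiber, uses Kuznetsov's base-change compatibility to get $j_*\mA_X\subset\mA$, and then proves that $j_*\colon K_0(\P^n)\to K_0(\mathfrak{X})$ is injective via degeneration of the Leray spectral sequence and triviality of the monodromy on $H^*(\P^n)$. You would need to either carry out your compatibility-and-supports program in full (which is substantial new work) or switch to an argument of this fiberwise type.
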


\subsubsection*{Structure of the paper} In Section~\ref{sec: preliminaries}, we recall various notions useful in the rest of the paper and establish some basic properties of NSSI varieties. In Section~\ref{sec: topological rigidity}, we prove a general rigidity statement for admissible subcategories and deduce  Theorem~\ref{thm: main theorem} from it. In Section~\ref{sec: families of NSSI}, we prove Theorem~\ref{thm: family of nssi varieties}. Finally, in Section~\ref{sec: products}, we prove Proposition~\ref{prop: no phantoms}.

\subsubsection*{Notation and conventions} All triangulated categories and varieties in this paper are over a field $\k$. Tensor products, pullbacks, and pushforwards are assumed to be derived. For a subcategory $\mA$ and an object $F$ in some category the symbol $F \otimes \mA$ denotes the set of all tensor products $F \otimes A$ for objects $A \in \mA$.

\subsection*{Acknowledgments} I thank Alexander Perry for asking a question that grew into the current paper. I thank Alexander Kuznetsov and Shinnosuke Okawa for helpful suggestions. 

\section{Preliminaries}
\label{sec: preliminaries}

\subsection{Background on semiorthogonal decompositions and generators}
We recall the notion and basic properties of semiorthogonal decompositions from \cite{bond-kapr}. For a triangulated category $T$, a \emph{semiorthogonal decomposition} $T = \langle \mA, \mB \rangle$ is a pair of strictly full triangulated subcategories $\mA$ and $\mB$ such that for any $A \in \mA$ and any $B \in \mB$, the graded $\Hom$-space~$\RHom(B, A)$ vanishes, and such that for any object $E \in T$ there exists a distinguished triangle
\[
  B \lra E \lra A, 
\]
where $A \in \mA$ and $B \in \mB$. We call this the \emph{projection triangle} of $E$. It is unique up to a unique isomorphism.

When $T = \Perf(Y)$ is a triangulated category of perfect complexes on a scheme $Y$ over a field $\k$, in any semiorthogonal decomposition, the component $\mA \subset T$ is left admissible, \textit{i.e.}, its inclusion functor into~$\Perf(Y)$ admits a left adjoint, and similarly $\mB$ is right admissible.
The adjoint functors are called the left and the right projection functors, respectively. Moreover, if~$\mA \subset \Dbcoh(Y)$ is a left admissible subcategory, then the category
\[
  {}^\perp\mA := \{ B \in \Dbcoh(Y) \,\, | \,\, \forall A \in \mA, \,\, \RHom(B, A) = 0 \}
\]
is right admissible, and there is a semiorthogonal decomposition $\Dbcoh(Y) = \langle \mA, {}^\perp\mA \rangle$.

The derived category $\Dbcoh(Y)$ is said to be \emph{indecomposable} if for any semiorthogonal decomposition $\Dbcoh(Y) = \langle \mA, \mB \rangle$, either $\mA = 0$, or $\mB = 0$. Equivalently, the only admissible subcategories of $\Dbcoh(Y)$ are zero and the whole $\Dbcoh(Y)$.

We also need the notion of a classical generator of a category.

\begin{definition}
  Let $\mathcal{T}$ be a triangulated category. An object $G \in \mathcal{T}$ is called a \emph{classical generator} of $\mathcal{T}$ if the smallest triangulated subcategory of $\mathcal{T}$ containing $G$ and closed under taking direct summands is the whole~$\mathcal{T}$.
\end{definition}

\begin{lemma}
  \label{lem: classical generator and finite morphism}
  Let $f\colon Y^\prime \to Y$ be an affine morphism of schemes. If\, $G \in \Perf(Y)$ is a classical generator of\, $\Perf(Y)$, then its pullback~$f^*G$ is a classical generator of\, $\Perf(Y^\prime)$.
\end{lemma}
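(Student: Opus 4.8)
The plan is to prove the statement by passing to the ambient category of quasi-coherent complexes and using a standard characterization of classical generators there. We may freely assume that $Y$, and hence $Y'$, are quasi-compact and quasi-separated: a finite morphism is affine and of finite type, so this costs nothing for the schemes of interest. I would then invoke the standard fact (Neeman / Bondal--Van den Bergh / Thomason--Trobaugh) that for such a scheme $\Dqcoh(Y)$ is compactly generated with $\Perf(Y) = \Dqcoh(Y)^c$, and that an object $G \in \Perf(Y)$ classically generates $\Perf(Y)$ \emph{if and only if} it generates $\Dqcoh(Y)$, i.e. the only $X \in \Dqcoh(Y)$ with $\RHom_Y(G, X[n]) = 0$ for all $n$ is $X = 0$. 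As a consistency check one notes that derived pullback, being a triangulated and summand-preserving functor, carries the thick subcategory generated by $G$ into the one generated by $f^* G$, so that $f^*\Perf(Y) \subseteq \langle f^* G\rangle$; but the orthogonality criterion above gives the cleanest route, so I would argue through it directly.

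With this reformulation the goal becomes: if $X' \in \Dqcoh(Y')$ satisfies $\RHom_{Y'}(f^* G, X'[n]) = 0$ for all $n$, then $X' = 0$. Here I would use the derived adjunction $f^* \dashv f_*$ to produce natural isomorphisms $\RHom_{Y'}(f^* G, X'[n]) \iso \RHom_Y(G, f_* X'[n])$. Since $f$ is finite, in particular quasi-compact and quasi-separated, the (derived) pushforward preserves quasi-coherence, so $f_* X' \in \Dqcoh(Y)$. The assumed vanishing then says that $f_* X'$ lies in the right orthogonal of $G$ inside $\Dqcoh(Y)$, which is zero because $G$ generates $\Dqcoh(Y)$. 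Hence $f_* X' = 0$.

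It remains to deduce $X' = 0$ from $f_* X' = 0$, and this is precisely the step where finiteness (via affineness) enters. Because $f$ is affine, $f_*$ has no higher direct images on quasi-coherent sheaves, so $H^i(f_* X') \iso f_* H^i(X')$ for every $i$, and the underived functor $f_*\colon \mathrm{QCoh}(Y') \to \mathrm{QCoh}(Y)$ is exact and faithful: locally it is restriction of scalars along $\O_Y \to f_* \O_{Y'}$, which sends a nonzero module to a nonzero module. Therefore $f_* X' = 0$ forces $f_* H^i(X') = 0$, hence $H^i(X') = 0$ for all $i$, i.e. $X' = 0$. This establishes that $f^* G$ generates $\Dqcoh(Y')$ and so classically generates $\Perf(Y')$. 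I expect the only genuinely nontrivial ingredient to be the cited equivalence between classical generation of $\Perf$ and generation of the large category $\Dqcoh$, together with checking its hypotheses; everything else is the formal adjunction and the exactness and conservativity of pushforward along an affine morphism. A more hands-on alternative would truncate a bar resolution of an $f_*\O_{Y'}$-module by modules induced from $\O_Y$, but this requires control of Tor-dimension, so I would avoid it in favor of the orthogonality argument above.
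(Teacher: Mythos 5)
Your argument is correct and follows essentially the same route as the paper's own proof: both reduce classical generation of $\Perf(Y')$ to the Bondal--Van den Bergh orthogonality criterion in $\Dqcoh(Y')$, transfer the vanishing across the adjunction $f^* \dashv f_*$, and conclude from the fact that $f_*$ kills no nonzero object when $f$ is finite. The only difference is that you spell out the last step (exactness and faithfulness of $f_*$ via affineness) which the paper states without proof.
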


\begin{proof}
  By \cite[Theorem~2.1.2]{bondal-vandenbergh}, a perfect complex $f^*G$ is a classical generator for $\Perf(Y^\prime)$ if and only if for any nonzero $\mathcal{F} \in \Dqcoh(Y^\prime)$, the space $\Hom_{Y^\prime}(f^*G, \mathcal{F})$ is nonzero. By adjunction, there is an isomorphism
  \begin{equation}
    \label{eq: classical generator and pushforward}
    \Hom_{Y^\prime}(f^*G, \mathcal{F}) \caniso \Hom_Y(G, f_*\mathcal{F}).
  \end{equation}
  Since $f$ is an affine morphism, the pushforward $f_*\mathcal{F}$ of a nonzero object is nonzero. Then applying \cite[Theorem~2.1.2]{bondal-vandenbergh} on the variety~$Y$, we see that the right-hand space in \eqref{eq: classical generator and pushforward} is nonzero, and hence $f^*G$ is a classical generator.
\end{proof}

\begin{lemma}
  \label{lem: deep admissible containment criterion}
  Let $\mathfrak{D}$ be a triangulated category, and let $\mA \subset \mathfrak{D}$ be a left admissible subcategory. If an object~$E \in \mathfrak{D}$ is a classical generator of the subcategory~${}^\perp \mA$, then for any object~$F \in \mathfrak{D}$, the vanishing~$\RHom_{\mathfrak{D}}(E, F) = 0$ holds if and only if~$F \in \mA$.
\end{lemma}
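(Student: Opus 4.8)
The plan is to exploit the semiorthogonal decomposition produced by the left-admissibility of $\mA$, and then invoke the principle that a classical generator detects the vanishing of objects.

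One direction is immediate. By hypothesis $E$ is a classical generator of~${}^\perp\mA$, so in particular $E \in {}^\perp\mA$, which by definition means $\RHom_{\mathfrak{D}}(E, A) = 0$ for every $A \in \mA$. Thus $F \in \mA$ already forces $\RHom_{\mathfrak{D}}(E, F) = 0$.

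For the converse I would first use that, since $\mA$ is left-admissible, the left adjoint to the inclusion $\mA \monoarrow \mathfrak{D}$ assembles into a semiorthogonal decomposition $\mathfrak{D} = \langle \mA, {}^\perp\mA \rangle$ in the sense of \cite{bond-kapr}: every object $F \in \mathfrak{D}$ fits into a projection triangle
\[ B \to F \to A \]
with $B \in {}^\perp\mA$ and $A \in \mA$. It suffices to show that the hypothesis $\RHom_{\mathfrak{D}}(E, F) = 0$ forces $B = 0$, for then $F \iso A$ lies in $\mA$. Applying the cohomological functor $\RHom_{\mathfrak{D}}(E, -)$ to the triangle and using $\RHom_{\mathfrak{D}}(E, A) = 0$ (as $E \in {}^\perp\mA$ and $A \in \mA$) together with $\RHom_{\mathfrak{D}}(E, F) = 0$, the long exact sequence collapses to give $\RHom_{\mathfrak{D}}(E, B) = 0$.

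The remaining step --- deducing $B = 0$ from $\RHom_{\mathfrak{D}}(E, B) = 0$ --- is where the content lies, and it is exactly the statement that a classical generator sees every nonzero object. I would consider the full subcategory of ${}^\perp\mA$ spanned by those $X$ with $\RHom_{\mathfrak{D}}(X, B) = 0$. Since $\RHom_{\mathfrak{D}}(-, B)$ is cohomological and additive, this subcategory is triangulated and closed under direct summands; as it contains $E$, it must contain the entire thick subcategory generated by $E$, which is all of ${}^\perp\mA$. Because $B$ itself belongs to ${}^\perp\mA$, we obtain $\RHom_{\mathfrak{D}}(B, B) = 0$, so $\mathrm{id}_B = 0$ and $B = 0$. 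The main obstacle here is organizational rather than deep: one must apply the generator property to the subcategory cut out by $\RHom_{\mathfrak{D}}(-, B)$ rather than by $\RHom_{\mathfrak{D}}(E, -)$, since only the former contains $E$, but once the correct thick subcategory is identified the argument is purely formal.
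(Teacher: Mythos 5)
Your proof is correct and follows essentially the same route as the paper: both use the projection triangle of $F$ for the decomposition $\mathfrak{D} = \langle \mA, {}^\perp\mA\rangle$, kill the $\mA$-component using $E \in {}^\perp\mA$, and then use the classical generator property to conclude the ${}^\perp\mA$-component vanishes. The only difference is that where the paper cites \cite[(2.1)]{bondal-vandenbergh} for the fact that a classical generator detects nonzero objects, you prove it directly via the thick subcategory $\{X \in {}^\perp\mA : \RHom(X,B)=0\}$ and the identity morphism of $B$ --- a correct and self-contained substitute.
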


\begin{proof}
  Since $\mA$ is a left admissible subcategory, it induces a semiorthogonal decomposition $\mathfrak{D} = \langle \mA, \mB \rangle$, where $\mB$ is the orthogonal subcategory ${}^\perp \mA$ and $\mB$ is a right admissible
  subcategory; see~\cite[Lemma~3.1]{bondal-exceptional}.
  
  Consider the projection triangle for the object $F$:
  \[
    B \lra F \lra A,
  \]
  where $B \in \mB$ and $A \in \mA$.
  An application of the functor $\RHom_{\mathfrak{D}}(E, -)$ to it results in a distinguished triangle of graded vector spaces:
  \[
    \RHom(E, B) \lra \RHom(E, F) \lra \RHom(E, A).
  \]
  Here the rightmost object $\RHom(E, A)$ is zero since $E \in \mB = {}^\perp \mA$. Thus $\RHom(E, F) = 0$ if and only if $\RHom(E, B) = 0$. But the object $B$ lies in $\mB$, and $E$ is a classical generator of that subcategory. Since any classical generator is a generator, see \cite[Equation~(2.1)]{bondal-vandenbergh}, the vanishing of~$\RHom(E, B)$ implies that $B$ is zero, \textit{i.e.}, that the object $F \iso A$ lies in $\mA$, as claimed.
\end{proof}

\begin{lemma}
  \label{lem: generators for admissible subcategories}
  Let $\mathfrak{D}$ be a triangulated category which admits a classical generator. Let $\mA \subset \mathfrak{D}$ be a left admissible or right admissible subcategory. Then $\mA$ has a classical generator.
\end{lemma}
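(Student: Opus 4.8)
The plan is to show that the image of a classical generator of~$\mathfrak{D}$ under the projection functor onto~$\mA$ is itself a classical generator of~$\mA$. I will treat the left-admissible case explicitly; the right-admissible case is entirely symmetric, with the right adjoint playing the role of the left adjoint.

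First I would fix a classical generator $G \in \mathfrak{D}$ and recall the setup from the proof of Lemma~\ref{lem: deep admissible containment criterion}: left-admissibility of~$\mA$ produces a semiorthogonal decomposition $\mathfrak{D} = \langle \mA, \mB \rangle$ together with the projection functor $\mA_L\colon \mathfrak{D} \to \mA$, which is left adjoint to the inclusion $\mA \monoarrow \mathfrak{D}$. This functor is exact, and being additive it preserves direct summands. I claim that $\mA_L(G)$ is a classical generator of~$\mA$. Throughout, write $\langle - \rangle$ for the thick subcategory (triangulated and closed under direct summands) generated by an object inside~$\mA$.

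Next I would establish the key containment $\mA_L(\mathfrak{D}) \subseteq \langle \mA_L(G) \rangle$. For this, consider the strictly full subcategory $\mathcal{P} := \{ X \in \mathfrak{D} \mid \mA_L(X) \in \langle \mA_L(G) \rangle \}$. Because $\mA_L$ is exact and summand-preserving and $\langle \mA_L(G) \rangle$ is thick, the subcategory~$\mathcal{P}$ is triangulated and closed under direct summands: a triangle with two vertices in~$\mathcal{P}$ maps under~$\mA_L$ to a triangle with two vertices in $\langle \mA_L(G) \rangle$, forcing the third there as well, and a summand of an object of~$\mathcal{P}$ maps into a summand of an object of $\langle \mA_L(G) \rangle$. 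Since $\mathcal{P}$ contains~$G$, it is all of~$\mathfrak{D}$, which is exactly the desired containment.

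Finally I would invoke that $\mA_L$ restricted along the inclusion is naturally isomorphic to the identity, so $\mA_L$ is essentially surjective onto~$\mA$: every $A \in \mA$ satisfies $A \iso \mA_L(A)$ and hence lies in $\langle \mA_L(G) \rangle$ by the previous step. This yields $\mA = \langle \mA_L(G) \rangle$, i.e. $\mA_L(G)$ classically generates~$\mA$. For a right-admissible~$\mA$ one repeats the argument verbatim with the right projection functor $\mA_R$, which is likewise exact and restricts to the identity on~$\mA$. I do not expect a genuine obstacle here; the only point demanding care is the bookkeeping that the projection functor is honestly exact and summand-preserving, so that preimages of thick subcategories remain thick.
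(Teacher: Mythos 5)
Your argument is correct and is essentially the paper's own proof: the paper also takes the image of a classical generator under the projection functor, noting that any essentially surjective exact functor carries classical generators to classical generators, which is exactly the thick-subcategory preimage argument you spell out. You simply unpack the "it is easy to see" step in more detail.
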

\begin{proof}
  Let $G \in \mathfrak{D}$ be a classical generator. For any essentially surjective functor~$\Phi\colon\mathfrak{D} \to \mathfrak{D}^\prime$ between triangulated categories, it is easy to see that $\Phi(G)$ is a classical generator in $\mathfrak{D}^\prime$. By assumption, the inclusion functor $\mA \monoarrow \mathfrak{D}$ has either a left or a right adjoint functor, which is a projection and hence surjective on objects. Thus $\mA$ has a classical generator.
\end{proof}

\begin{lemma}
  \label{lem: admissible containment criterion}
  Let $\mathfrak{D}$ be a triangulated category which admits a classical generator. Let $\mA \subset \mathfrak{D}$ be a left admissible subcategory. Then there exists an object $E \in \mathfrak{D}$ such that an object $F \in \mathfrak{D}$ lies in~$\mA$ if and only if\, $\RHom_{\mathfrak{D}}(E, F) = 0$.
\end{lemma}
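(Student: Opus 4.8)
The plan is to obtain $E$ directly by combining the two preceding lemmas, since all the needed pieces are now in place. First I would record the semiorthogonal structure: because $\mA$ is left-admissible in $\mathfrak{D}$, the inclusion admits a left adjoint, and (as in the proof of Lemma~\ref{lem: deep admissible containment criterion}, via \cite[Lem.~3.6]{noncommutativehpd}) this produces a semiorthogonal decomposition $\mathfrak{D} = \langle \mA, \mB \rangle$ with $\mB := {}^\perp\mA$ right-admissible. The target object $E$ will be a classical generator of this orthogonal subcategory $\mB$.

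Next I would produce such a generator. By hypothesis $\mathfrak{D}$ admits a classical generator, and $\mB$ is right-admissible; hence Lemma~\ref{lem: generators for admissible subcategories} applies and guarantees that $\mB = {}^\perp\mA$ has a classical generator $E \in \mathfrak{D}$. Finally I would feed this $E$ into Lemma~\ref{lem: deep admissible containment criterion}: since $E$ is a classical generator of ${}^\perp\mA$, that lemma gives, for every $F \in \mathfrak{D}$, the equivalence $\RHom_{\mathfrak{D}}(E, F) = 0 \iff F \in \mA$. This is exactly the statement, so the object $E$ witnesses the claim.

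There is essentially no hard step here; the content of the lemma is the bookkeeping of matching hypotheses between the two earlier results. The only point that requires a moment's care is that the existence of a classical generator genuinely descends from $\mathfrak{D}$ to the orthogonal $\mB$, which is precisely what Lemma~\ref{lem: generators for admissible subcategories} supplies — the right adjoint projection onto the right-admissible subcategory $\mB$ is essentially surjective, so it carries a classical generator of $\mathfrak{D}$ to one of $\mB$. Once that is noted, the conclusion is immediate.
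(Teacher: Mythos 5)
Your proposal is correct and follows exactly the paper's own argument: use \cite[Lem.~3.6]{noncommutativehpd} to see that ${}^\perp\mA$ is right-admissible, apply Lemma~\ref{lem: generators for admissible subcategories} to get a classical generator $E$ of ${}^\perp\mA$, and conclude with Lemma~\ref{lem: deep admissible containment criterion}. No gaps.
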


\begin{proof}
  By \cite[Lemma~3.1]{bondal-exceptional}, the subcategory ${}^\perp \mA \subset \mathfrak{D}$ is right admissible. Then by Lemma~\ref{lem: generators for admissible subcategories}, it has a classical generator. So we conclude by Lemma~\ref{lem: deep admissible containment criterion}.
\end{proof}

The proof of Theorem~\ref{impl: family of nssi varieties} uses some technical properties of linear categories. We discuss them in the two subsections below.

\subsection{Linearity for subcategories closed under the action}

Let $Y$ be a scheme over the field $\k$. Let $\mathfrak{D}$ be a $\Perf(Y)$-linear category. Assume that $\mA \subset \mathfrak{D}$ is a strictly full subcategory that is closed under the action of~$\Perf(Y)$ on~$\mathfrak{D}$ in the sense of Definition~\ref{def: closed under action}. Then it would be reasonable to expect that~$\mA$ has the structure of a~$\Perf(Y)$-linear category on its own. We use this structure in the proof of Theorem~\ref{thm: family of nssi varieties}. To define the structure, we need to construct some higher associativity data on $\mA$. The next proposition shows that this can be done in a canonical way if the subcategory is left admissible (or right admissible, with an analogous proof). The result essentially follows from the definition of an action of a symmetric monoidal $\infty$-category given in \cite{higheralgebra}. Since we were not able to find this statement in the literature, we give a sketch of the proof.

\begin{proposition}
  \label{prop: induced linearity homotopically}
  Let $Y$ be a scheme over the field $\k$. Let $\mathfrak{D}$ be a~$\Perf(Y)$-linear category, and let~$\mA \subset \mathfrak{D}$ be a left admissible $\k$-linear subcategory. Assume that $\mA$ is closed under the action of\, $\Perf(Y)$ on~$\mathfrak{D}$. Then the subcategory $\mA$ is canonically equipped with the structure of a $\Perf(Y)$-linear category, and the inclusion functor $\mA \monoarrow \mathfrak{D}$ has a canonical lift to a~$\Perf(Y)$-linear functor between~$\Perf(Y)$-linear categories.
\end{proposition}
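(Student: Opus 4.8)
The plan is to extract the $\Perf(Y)$-linear structure on $\mA$ directly from the module-structure encoding of the action on $\mathfrak{D}$ in the formalism of \cite{higheralgebra}. Recall that a $\Perf(Y)$-linear structure on a stable $\infty$-category is the same datum as the structure of a left module over the $\mathbb{E}_\infty$-algebra object $\Perf(Y)$ inside the symmetric monoidal $\infty$-category $\mathrm{Pr}^L_{\k}$ (or $\mathrm{Cat}^{\mathrm{st,ex}}_\k$) of stable $\k$-linear categories; concretely it is encoded by a coCartesian fibration over the category of operators $\mathrm{LM}^\otimes$ whose fiber-wise data specifies the action functor together with all higher associativity and unitality coherences. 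The strategy is to realize $\mA$ as a left $\Perf(Y)$-submodule of $\mathfrak{D}$ and to transport the module structure along the left-adjoint projection.

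First I would record that, because $\mA$ is left-admissible, the inclusion $i\colon \mA \monoarrow \mathfrak{D}$ admits a left adjoint $L = \mA_L\colon \mathfrak{D} \to \mA$, exhibiting $\mA$ as a localization of $\mathfrak{D}$ with respect to the morphisms inverted by $L$. The key observation is that the closedness hypothesis says precisely that the class $W$ of $L$-equivalences is stable under the action of every object $S \in \Perf(Y)$: if $\phi$ is inverted by $L$, then so is $\phi \cdot S$, because the objects $A \cdot S$ remain in $\mA$ and the projection triangle interacts compatibly with the action functor. In the language of \cite{higheralgebra}, this is exactly the hypothesis needed to apply the localization-of-module-categories machinery (the monoidal/modular refinement of \cite[Prop.~2.2.1.9]{higheralgebra}): a left $\Perf(Y)$-module $\mathfrak{D}$ together with an action-compatible collection of morphisms $W$ admits a localization $\mathfrak{D}[W^{-1}] \simeq \mA$ in the $\infty$-category of left $\Perf(Y)$-modules, and the localization functor $L$ is canonically $\Perf(Y)$-linear. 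Dually, one checks that the fully faithful inclusion $i$ is the right adjoint in $\Perf(Y)$-modules, so it too inherits a canonical $\Perf(Y)$-linear structure; this is where one uses that right adjoints to linear localization functors are automatically lax linear and in fact strictly linear here, since $i$ is fully faithful and $\mA$ is closed under the action.

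I expect the main obstacle to be the bookkeeping of the higher coherence data rather than any conceptual difficulty: one must verify that the collection $W$ of $L$-local equivalences forms a suitable (multiplicatively closed, compatible with the tensoring) class so that the cited localization result applies at the level of $\mathrm{LM}^\otimes$-fibrations, and that the resulting $\Perf(Y)$-linear structure on $\mA$ really does refine the underlying triangulated action $A \cdot S$ we started with. Concretely, the delicate point is producing the associativity isomorphisms $(A \cdot S) \cdot S' \iso A \cdot (S \otimes S')$ inside $\mA$ coherently, i.e.\ as a genuine morphism of coCartesian fibrations, and confirming that applying the forgetful functor to the homotopy category recovers the naive action. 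Since the proposition only asks for a \emph{canonical} structure together with a linear lift of the inclusion, and since the paper explicitly requests only a sketch, I would present the argument at the level of the module-localization statement, cite the relevant coherence results from \cite{higheralgebra}, and leave the diagram-chasing of the $\mathrm{LM}^\otimes$ coherences implicit.
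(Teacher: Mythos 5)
Your argument is correct in outline but takes a genuinely different route from the paper. The paper constructs the action functor on $\mA$ by hand, as the composite $\Phi \circ a_{\mathfrak{D}} \circ (\iota_\mA \otimes \mathrm{id})$ of the inclusion, the given action on $\mathfrak{D}$, and the left adjoint projection $\Phi$, then uses the closedness hypothesis to show the counit makes the relevant squares commute up to isomorphism, and sketches how to propagate this to the higher associativity data. You instead invoke the compatible-localization machinery of \cite{higheralgebra}: since $\mA$ is a reflective localization of $\mathfrak{D}$, it suffices to check that the class $W$ of $L$-equivalences is stable under the action of every object of $\Perf(Y)$, and then the $\mathrm{LM}^\otimes$-version of \cite[Prop.~2.2.1.9]{higheralgebra} hands you the module structure on $\mA$ and the linearity of $L$ (hence, by adjunction and fully faithfulness on local objects, of the inclusion) with all coherences at once. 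This is arguably cleaner, since it outsources exactly the bookkeeping that the paper's sketch leaves implicit. The one step you gloss over too quickly is the stability of $W$: a morphism $\phi$ is an $L$-equivalence iff $\operatorname{cone}(\phi)$ lies in ${}^\perp\mA$, so what you actually need is that ${}^\perp\mA$ (not $\mA$) is closed under the action. This does follow from the hypothesis, but only via dualizability: for $B \in {}^\perp\mA$, $S \in \Perf(Y)$, and $A \in \mA$ one has $\Hom(B \cdot S, A) \caniso \Hom(B, A \cdot S^\dual) = 0$ because $A \cdot S^\dual \in \mA$. Your stated justification (``the objects $A \cdot S$ remain in $\mA$ and the projection triangle interacts compatibly with the action'') skips this point; with that one line added, the argument goes through at the same level of rigor as the paper's own sketch.
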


\begin{proof}[Sketch of the proof]
  The structure of a $\Perf(Y)$-action on the stable $\infty$-category $\mA$ consists of an action functor
  \(
  a_\mA\colon \mA \otimes_{\k} \Perf(Y) \to \mA
  \)
  together with higher associativity data. As a first step, we will define the action functor on $\mA$ and check that it agrees with the action functor on $\mathfrak{D}$.

  Since $\mA$ is a left admissible subcategory, the inclusion functor $\iota_\mA\colon \mA \monoarrow \mathfrak{D}$ has a left adjoint~$\Phi\colon \mathfrak{D} \to \mA$. We use the action functor $a_{\mathfrak{D}}\colon \mathfrak{D} \otimes \Perf(Y) \to \mathfrak{D}$ on $\mathfrak{D}$ to define a functor $a_\mA$ on $\mA$ as a composition:
  \[
    a_\mA\colon
    \mA \otimes_\k \Perf(Y)
    \xrightarrow{\iota_\mA \otimes \mathrm{id}}
    \mathfrak{D} \otimes_\k \Perf(Y)
    \xlongrightarrow{a_{\mathfrak{D}}}
    \mathfrak{D}
    \xlongrightarrow{\Phi}
    \mA\rlap{.}
  \]
  By definition the functor $a_\mA$ fits into the following square of functors with a natural transformation from the upper path to the lower path given by the composition of the morphism~$a_{\mathfrak{D}} \circ (\iota_\mA \otimes \mathrm{id})$ with the unit natural transformation $\eta\colon\mathrm{id}_{\mathfrak{D}} \Rightarrow  \iota_\mA \circ \Phi $:
  \begin{equation}
    \label{eq: action linearity}
    \begin{tikzcd}
      {\mA \otimes_\k \Perf(Y)} & \mA \\
      {\mathfrak{D} \otimes_\k \Perf(Y)} & {\mathfrak{D}\rlap{.}}
      \arrow["{\iota_\mA}", from=1-2, to=2-2]
      \arrow["{\iota_\mA \otimes \mathrm{id}}", from=1-1, to=2-1]
      \arrow["{a_\mA}", from=1-1, to=1-2]
      \arrow["{a_\mathfrak{D}}", from=2-1, to=2-2]
      \arrow[shorten <=6pt, shorten >=6pt, Rightarrow, from=1-2, to=2-1]
    \end{tikzcd}\end{equation}
  
  Since $\mA$ is closed under the action of $\Perf(Y)$, the image of the functor $a_{\mathfrak{D}} \circ (\iota_\mA \otimes \mathrm{id})$ lies in the subcategory~$\mA$. The unit natural transformation $\eta$ between endofunctors of $\mathfrak{D}$ is an isomorphism on each object of $\mA \subset \mathfrak{D}$ by definition. Thus the natural transformation in the square~\eqref{eq: action linearity} is an isomorphism of functors. This property is one of the ingredients for the lift of the inclusion functor $\iota_\mA\colon \mA \monoarrow \mathfrak{D}$ to a $\Perf(Y)$-linear functor.

  We also need to provide associativity morphisms for the action functor $a_\mA$. Among other things, we need a natural isomorphism of functors that makes the square below commute, where the morphism $m\colon \Perf(Y) \otimes \Perf(Y) \to \Perf(Y)$ is the tensor product of objects:
  \begin{equation}
    \label{eq: associativity of action}
    \begin{tikzcd}
      {\mA \otimes \Perf(Y) \otimes \Perf(Y)} & {\mA \otimes \Perf(Y)} \\
      {\mA \otimes \Perf(Y)} & \mA\rlap{.}
      \arrow["{\mathrm{id} \otimes m\;}", from=1-1, to=1-2]
      \arrow["{a_\mA}", from=1-2, to=2-2]
      \arrow["{a_\mA \otimes \mathrm{id}}"', from=1-1, to=2-1]
      \arrow["{a_\mA}"', from=2-1, to=2-2]
      \arrow[shorten <=6pt, shorten >=6pt, Rightarrow, 2tail reversed, from=1-2, to=2-1]
    \end{tikzcd}\end{equation}
  Note that we already have an analogous natural isomorphism for the action on the category~$\mathfrak{D}$. Using the inclusion functor $\iota_\mA$ and the projection functor $\Phi$, we can induce the natural transformation as follows: 
  \[\begin{tikzcd}
      {\mA \otimes \Perf(Y) \otimes \Perf(Y)} & {\mathfrak{D} \otimes \Perf(Y) \otimes \Perf(Y)} & {\mathfrak{D} \otimes \Perf(Y)} \\
      & {\mathfrak{D} \otimes \Perf(Y)} & \mathfrak{D} & \mA\rlap{.}
      \arrow["{\mathrm{id} \otimes m\;}", from=1-2, to=1-3]
      \arrow["{a_\mathfrak{D}}", from=1-3, to=2-3]
      \arrow["{a_\mathfrak{D} \otimes \mathrm{id}}"', from=1-2, to=2-2]
      \arrow["{a_\mathfrak{D}}"', from=2-2, to=2-3]
      \arrow[shorten <=6pt, shorten >=6pt, Rightarrow, 2tail reversed, from=1-3, to=2-2]
      \arrow["{\iota_\mA}", from=1-1, to=1-2]
      \arrow["\Phi", from=2-3, to=2-4]
    \end{tikzcd}\]

  Using the fact that $\mA$ is closed under the action of $\Perf(Y)$, we can again check that this induced natural transformation is an isomorphism of functors.
  However, this is not exactly the natural transformation we need since its source and target functors are not the same as in~\eqref{eq: associativity of action}. Still, using the inclusion functor of~$\mA$, its adjoint, and the unit natural transformation, we can construct the required natural isomorphism using vertical and horizontal compositions. Similar arguments can be used to induce the full $\Perf(Y)$-linear structure on $\mA$ and the inclusion functor.
\end{proof}

\subsection{Some properties of linear categories}
For the proof of Theorem~\ref{thm: family of nssi varieties}, we need some properties of linear categories. They are more or less formal consequences of definitions as given in \cite{higheralgebra} or \cite{noncommutativehpd}.

First, we recall the notion of a mapping object and its properties.

\begin{definition}[\textit{cf.}~{\cite{lurie-htt}}]
  \label{def: mapping object}
  Let $Y$ be a scheme, and let $\mathfrak{D}$ be a $Y$-linear category. Let $E_1, E_2 \in \mathfrak{D}$ be two objects. An object
  $\mathcal{H} \in \Perf(Y)$
  is called a \emph{mapping object} between $E_1$ and $E_2$ if it satisfies the following universal property:
  \[
    \RHom_Y(-, \mathcal{H}) \caniso \RHom_{\mathfrak{D}}((-) \cdot E_1, E_2).
  \]
  If a mapping object exists, it is unique and is denoted by $\mathcal{H}om_{\mathfrak{D}}(E_1, E_2) \in \Perf(Y)$.
\end{definition}

\begin{remark}
  In more general contexts, a mapping object is usually defined as an object in the larger category~$\Dqcoh(Y)$, not necessarily in~$\Perf(Y)$. As shown by the following lemma, in all situations we encounter in this paper, the mapping object is perfect.
\end{remark}

\begin{lemma}
  \label{lem: existence of mapping objects}
  Let $Y$ be a quasi-compact separated scheme over a field $\k$, and let $\mathfrak{D}$ be a~\mbox{$Y$-linear} category which is proper over $Y$. Then for any two objects $E_1, E_2 \in \mathfrak{D}$, the mapping object~$\mathcal{H}om(E_1, E_2) \in \Perf(Y)$ exists.
\end{lemma}
\begin{proof}
  The mapping object exists by \cite[Proposition~5.5.2.2]{lurie-htt} as an object in $\Dqcoh(Y)$, and it is a perfect object by~\cite[Lemma~4.7]{noncommutativehpd}.
\end{proof}

\begin{lemma}
  \label{lem: internal hom and orthogonals to action}
  Let $Y$ be a quasi-compact separated scheme over a field $\k$. Let $\mathfrak{D}$ be a~$Y$-linear category which is proper over $Y$. Let $E_1, E_2 \in \mathfrak{D}$ be two objects. The following conditions are equivalent:
  \begin{enumerate}
  \item $\mathcal{H}om(E_1, E_2) \in \Perf(Y)$ is a zero object.
  \item $E_2 \in \langle \Perf(Y) \cdot E_1 \rangle^\perp$. 
  \item The orbit $\Perf(Y) \cdot E_2$ lies in $E_1^\perp$.
  \end{enumerate}
\end{lemma}

\begin{proof}
  This follows directly from Definition~\ref{def: mapping object} and the adjunction isomorphism
  \[
    \RHom_{\mathfrak{D}}(F \cdot E_1, E_2) \caniso \RHom_{\mathfrak{D}}(E_1, F^\dual \cdot E_2)
  \]
  for any object $F \in \Perf(Y)$.
\end{proof}

We continue with a discussion of base change for linear categories.

\begin{definition}
  \label{def: induced functors for linear categories}
  Let $f\colon X \to Y$ be a morphism of schemes such that the pushforward $f_*$ sends perfect complexes on~$X$ to perfect complexes on $Y$. Let $\mathfrak{D}$ be a $Y$-linear category, and let $\mathfrak{D}_X := \mathfrak{D} \otimes_{\Perf(Y)} \Perf(X)$ be its base change along the morphism $f$. Then we define the pushforward and pullback functors between $\mathfrak{D}$ and $\mathfrak{D}_X$ as follows:
  
  \[
    \begin{aligned}
      &
      f_{\mathfrak{D}, *}\colon
      \mathfrak{D}_X
      \caniso
      \mathfrak{D} \otimes_{\Perf(Y)} \Perf(X)
      \xrightarrow{\mathrm{id} \otimes f_*}
      \mathfrak{D} \otimes_{\Perf(Y)} \Perf(Y)
      \caniso
      \mathfrak{D},
      \\
      &
      f_{\mathfrak{D}}^*\colon
      \mathfrak{D}
      \caniso
      \mathfrak{D} \otimes_{\Perf(Y)} \Perf(Y)
      \xrightarrow{\;\mathrm{id} \otimes f^*\!}
      \mathfrak{D} \otimes_{\Perf(Y)} \Perf(X)
      \caniso
      \mathfrak{D}_X.
    \end{aligned}
  \]
  To simplify the notation, we often refer to these functors as just $f_*$ and $f^*$ when there is no risk of confusion.
\end{definition}

\begin{remark}
  The condition that the pushforward sends perfect complexes to perfect complexes is satisfied, for example, for a flat proper morphism or for a proper morphism to a regular variety.
\end{remark}

For the future use, we record two properties of these functors.

\begin{lemma}
  \label{lem: properties of push-pull for linear categories}
  Let $f\colon X \to Y$ be a morphism  of schemes such that the pushforward $f_*$ sends perfect complexes on~$X$ to perfect complexes on $Y$. Let~$\mathfrak{D}$ be a~$Y$-linear category, and let~$\mathfrak{D}_X := \mathfrak{D} \otimes_{\Perf(Y)} \Perf(X)$ be its base change along the morphism $f$.
  \begin{enumerate}
  \item For any objects $D \in \mathfrak{D}$ and $E \in \Perf(X)$, the following variant of the projection formula holds in $\mathfrak{D}$:
    \begin{equation}
      \label{eqn: projection formula for linear categories}
      f_*(E) \cdot D \caniso f_{\mathfrak{D}, *}(E \cdot f_{\mathfrak{D}}^*(D)),
    \end{equation}
    where on the left-hand side, we use the $\Perf(Y)$-action on $\mathfrak{D}$ and on the right-hand side we use the $\Perf(X)$-action on $\mathfrak{D}_X$.
  \item If $\mA \subset \mathfrak{D}$ is a $Y$-linear left admissible subcategory and $\mA_X := \mA \otimes_{\Perf(Y)} \Perf(X) \subset \mathfrak{D}_X$ is its base change along the morphism $f$, then
    $f_{\mathfrak{D}}^*(\mA) \subset \mA_X$ and $f_{\mathfrak{D}, *}(\mA_X) \subset \mA$.
  \end{enumerate}
\end{lemma}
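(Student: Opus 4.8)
The plan is to prove both assertions by unwinding Definition~\ref{def: induced functors for linear categories}: each of $\widetilde{f^*}$, the $\Perf(X)$-action by $E$, and $\widetilde{f_*}$ is obtained by applying $\mathrm{id}_{\mathfrak{D}} \otimes_{\Perf(Y)} (-)$ to a functor between $\Perf(Y)$ and $\Perf(X)$, so both statements should reduce to the ordinary projection formula on $Y$ together with the naturality of base change in the linear category.

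For part (1), I would first observe that the endofunctor $\widetilde{f_*} \circ (E \cdot -) \circ \widetilde{f^*}$ of $\mathfrak{D} \caniso \mathfrak{D} \otimes_{\Perf(Y)} \Perf(Y)$ is, by the very definitions of the three functors, equal to $\mathrm{id}_{\mathfrak{D}} \otimes_{\Perf(Y)} \bigl( f_* \circ (E \otimes -) \circ f^* \bigr)$. The ordinary projection formula furnishes a natural isomorphism $f_*(E \otimes f^* M) \caniso f_*(E) \otimes M$ for $M \in \Perf(Y)$; upgraded to an isomorphism of $\Perf(Y)$-module endofunctors of $\Perf(Y)$, this reads $f_* \circ (E \otimes -) \circ f^* \caniso f_*(E) \otimes (-)$. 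Applying $\mathrm{id}_{\mathfrak{D}} \otimes_{\Perf(Y)} (-)$ to this isomorphism identifies $\widetilde{f_*} \circ (E \cdot -) \circ \widetilde{f^*}$ with the action of $f_*(E)$ on $\mathfrak{D}$; evaluating on $D$ yields~\eqref{eqn: projection formula for linear categories}.

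For part (2), the inclusion $\widetilde{f^*}(\mA) \subset \widetilde{\mA}$ is immediate from the naturality of the pullback: since $\iota_\mA\colon \mA \monoarrow \mathfrak{D}$ is a $Y$-linear functor, base change sends it to the inclusion $\widetilde{\mA} \monoarrow \widetilde{\mathfrak{D}}$, and the pullbacks for $\mA$ and for $\mathfrak{D}$ fit into a commuting square with these inclusions; explicitly $\widetilde{f^*}(A) \caniso A \otimes \mathcal{O}_X$ already lies in $\widetilde{\mA}$ for every $A \in \mA$. For the pushforward I would reduce to part (1): the subcategory $\widetilde{\mA} = \mA \otimes_{\Perf(Y)} \Perf(X)$ is classically generated by the objects $S \cdot \widetilde{f^*}(A) \caniso A \otimes S$ with $A \in \mA$ and $S \in \Perf(X)$, and on such a generator part~(1) gives $\widetilde{f_*}\bigl( S \cdot \widetilde{f^*}(A) \bigr) \caniso f_*(S) \cdot A$. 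This lies in $\mA$ because $f_*(S) \in \Perf(Y)$ and $\mA$, being a $Y$-linear subcategory, is closed under the action of $\Perf(Y)$. As $\widetilde{f_*}$ is exact and $\mA$ is thick, the containment $\widetilde{f_*}(\widetilde{\mA}) \subset \mA$ follows on all of $\widetilde{\mA}$.

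The content of the argument is entirely in the two inputs — the projection formula and the $Y$-linearity of $\mA$ — so I expect the only real obstacle to be the $\infty$-categorical bookkeeping: one must check that the coherence equivalences $\mathfrak{C} \otimes_{\Perf(Y)} \Perf(Y) \caniso \mathfrak{C}$ used to define $\widetilde{f_*}$ and $\widetilde{f^*}$ are natural in the $Y$-linear category $\mathfrak{C}$, and that the classical projection formula can be promoted to an isomorphism of $\Perf(Y)$-module functors so that it survives the operation $\mathrm{id}_{\mathfrak{D}} \otimes_{\Perf(Y)} (-)$. Both are standard features of the module-category formalism of~\cite{higheralgebra} and~\cite{noncommutativehpd}, so once they are granted the proof is a short chase through definitions.
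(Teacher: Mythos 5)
Your argument is correct and is exactly the route the paper intends: the paper's entire proof is the one-line remark that the lemma ``follows directly from Definition~\ref{def: induced functors for linear categories}'', and your write-up is just that definitional unwinding carried out in detail (the projection formula promoted to an isomorphism of $\Perf(Y)$-module functors for part~(1), and generation of $\widetilde{\mA}$ by pure tensors plus thickness of $\mA$ for part~(2)). The only cosmetic difference is that for $\widetilde{f_*}(\widetilde{\mA}) \subset \mA$ one could also argue directly from functoriality of $\mA \mapsto \mA \otimes_{\Perf(Y)}(-)$ applied to the inclusion $\mA \monoarrow \mathfrak{D}$, but your generator argument is equally valid.
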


\begin{proof}
  This follows directly from Definition~\ref{def: induced functors for linear categories}.
\end{proof}

\begin{lemma}
  \label{lem: change of bases and base change}
  Let $f\colon X \to Y$ be a morphism  of varieties such that the pushforward $f_*$ sends perfect complexes on~$X$ to perfect complexes on $Y$. Suppose that $f$ fits into a Cartesian square of schemes 
  \[\begin{tikzcd}
      {X} & {Y} \\
      {X^\prime} & {Y^\prime}
      \arrow["{f^\prime}", from=2-1, to=2-2]
      \arrow["{f}", from=1-1, to=1-2]
      \arrow["{g}", from=1-2, to=2-2]
      \arrow[from=1-1, to=2-1]
      \arrow["\lrcorner"{very near start, rotate=0}, from=1-1, to=2-2, phantom]
    \end{tikzcd}\]
  such that the morphism $g$ is flat.

 {\samepage Let~$\mathfrak{D}$ be a~$Y$-linear category. It can be considered as a category linear over $Y^\prime$ via the morphism $g\colon Y \to Y^\prime$. Let~$\mathfrak{D}_X$ be the base change of\, $\mathfrak{D}/Y$ along the morphism $f$, and let~$\mathfrak{D}_{X^\prime}$ be the base change of\, $\mathfrak{D}/Y^\prime$ along the morphism $f^\prime$. Then 
  \begin{enumerate}
  \item there is a natural equivalence of $X^\prime$-linear  categories $\mathfrak{D}_X \caniso \mathfrak{D}_{X^\prime}$;
  \item that equivalence identifies the functors $f_{\mathfrak{D}, *}$ and $f_{\mathfrak{D}}^*$ with the functors $f_{\mathfrak{D}^\prime, *}$ and $f_{\mathfrak{D}^\prime}^*$, respectively.
  \end{enumerate}
  }
\end{lemma}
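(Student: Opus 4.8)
The plan is to reduce everything to the base-change equivalence for perfect complexes. The essential geometric input is that, since the square is Cartesian and $g$ is flat, the schemes are Tor-independent, so the canonical symmetric monoidal comparison functor
\[
\Perf(Y) \otimes_{\Perf(Y^\prime)} \Perf(X^\prime) \longrightarrow \Perf(X)
\]
is an equivalence. Flatness of $g$ is used precisely here: Tor-independence is what guarantees that the relative tensor product on the left computes $\Perf$ of the fiber product $X = Y \times_{Y^\prime} X^\prime$ (see the base-change formalism in \cite{higheralgebra} and its use for linear categories in \cite{noncommutativehpd}). I would record this equivalence first, noting that it is $\Perf(X^\prime)$-linear, where $\Perf(X)$ is regarded as a $\Perf(X^\prime)$-module via the pullback along the left vertical map $h \colon X \to X^\prime$.

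Granting this, part~(1) is a formal manipulation of relative tensor products of modules over symmetric monoidal stable $\infty$-categories. Using associativity of $\otimes_{\Perf(-)}$ together with the equivalence above,
\[
\widetilde{\mathfrak{D}} = \mathfrak{D} \otimes_{\Perf(Y)} \Perf(X) \caniso \mathfrak{D} \otimes_{\Perf(Y)} \bigl( \Perf(Y) \otimes_{\Perf(Y^\prime)} \Perf(X^\prime) \bigr) \caniso \mathfrak{D} \otimes_{\Perf(Y^\prime)} \Perf(X^\prime) = \widetilde{\mathfrak{D}}^\prime ,
\]
where in the middle I use that $\mathfrak{D}$ is $Y^\prime$-linear by restriction along $g$. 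Each step is $\Perf(X^\prime)$-linear, so the composite is an equivalence of $X^\prime$-linear categories, which is the content of~(1).

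For part~(2), I would trace the two functors through this chain. The pullback case is the formal one: $\widetilde{f^*} = \mathrm{id}_{\mathfrak{D}} \otimes f^*$, and $f^* \colon \Perf(Y) \to \Perf(X)$ is, under the comparison equivalence, exactly the base change of $f^{\prime *}\colon \Perf(Y^\prime) \to \Perf(X^\prime)$ along $g$ (pullbacks always base-change, so no flatness is needed here), hence it matches $\widetilde{f^{\prime *}} = \mathrm{id}_{\mathfrak{D}} \otimes f^{\prime *}$. The pushforward case is where flatness re-enters: one must identify $f_* \colon \Perf(X) \to \Perf(Y)$ with the base change $\mathrm{id}_{\Perf(Y)} \otimes_{\Perf(Y^\prime)} f^\prime_*$, and this is precisely the flat base-change isomorphism $g^* f^\prime_* \iso f_* h^*$ for the Cartesian square, combined with the projection formula along $f$. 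Tensoring these identifications with $\mathrm{id}_{\mathfrak{D}}$ yields $\widetilde{f_*} \caniso \widetilde{f^\prime_*}$.

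I expect the main obstacle to be not any single conceptual point but the coherent bookkeeping in the $\infty$-categorical setting: establishing the comparison equivalence as a genuinely symmetric monoidal (and $\Perf(X^\prime)$-linear) equivalence, and checking that the higher associativity data of the two linear structures are carried into one another along the chain of equivalences above. Once the Tor-independent base-change equivalence for $\Perf$ is in hand as a symmetric monoidal equivalence, the remaining verifications are formal consequences of the definitions in \cite{higheralgebra} and \cite{noncommutativehpd}.
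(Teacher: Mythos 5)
Your proposal is correct and follows essentially the same route as the paper: flatness of $g$ gives the equivalence $\Perf(X) \caniso \Perf(Y) \otimes_{\Perf(Y^\prime)} \Perf(X^\prime)$, and part (1) is the same chain of associativity manipulations of relative tensor products. Your treatment of part (2) (pullbacks matching formally, pushforwards via flat base change and the projection formula) simply fills in details that the paper dismisses as easy to check.
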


\begin{proof}
  According to \cite[Section~2.3]{noncommutativehpd}, the base change~$\mathfrak{D}_X$ of the category~$\mathfrak{D}$ is obtained as a tensor product~$\mathfrak{D} \otimes_{\Perf(Y)} \Perf(X)$. Since the morphism $g$ is flat, by \cite[Equation~(2.1)]{noncommutativehpd}, the derived fiber product $Y \times^L_{Y^\prime} X^\prime$ is isomorphic to the usual geometric fiber product $X$, which means, explicitly, that
  \[
    \Perf(X) \caniso \Perf(Y) \otimes_{\Perf(Y^\prime)} \Perf(X^\prime).
  \]
  Thus we get a natural equivalence of categories
  \begin{equation}
    \begin{aligned}
      \mathfrak{D}_X
      :=
      \mathfrak{D} \otimes_{\Perf(Y)} \Perf(X)
      \caniso
      \mathfrak{D} \otimes_{\Perf(Y)} \left(
        \Perf(Y) \otimes_{\Perf(Y^\prime)} \Perf(X^\prime)
      \right)
      \caniso \\ \caniso
      \mathfrak{D} \otimes_{\Perf(Y^\prime)} \Perf(X^\prime)
      =:
      \mathfrak{D}_{X^\prime}.
    \end{aligned}
  \end{equation}
  It is easy to check that this equivalence is $\Perf(X^\prime)$-linear and that it respects the pushforward and the pullback functors.
\end{proof}

\subsection{Basic properties of NSSI schemes}
Now we start working with the notion of NSSI schemes, introduced in Definition~\ref{def: nssi}.

\begin{lemma}
  \label{lem: sanity check}
  Let $Y$ be a connected quasi-compact separated scheme over a field $\k$. If\, $Y$ is a NSSI scheme, then $\Perf(Y)$ is an indecomposable category.
\end{lemma}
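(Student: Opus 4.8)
The plan is to show that any semiorthogonal decomposition $\Dbcoh(Y) = \langle \mA, \mB \rangle$ must be trivial, by exploiting that $\Dbcoh(Y)$ is itself one of the linear categories governed by the NSSI hypothesis. Since $Y$ is smooth and proper, $\Dbcoh(Y) = \Perf(Y)$, and the tensor product makes $\Perf(Y)$ into a $\Perf(Y)$-linear category which is proper over $Y$; by \cite{bondal-vandenbergh} it has a classical generator. As recalled in the preliminaries, both components $\mA$ and $\mB$ of a semiorthogonal decomposition on a smooth proper variety are admissible, hence in particular left-admissible and $\k$-linear. Thus Definition~\ref{def: nssi} applies to each of them, and I conclude that both $\mA$ and $\mB$ are closed under the action of $\Perf(Y)$ in the sense of Definition~\ref{def: closed under action}, i.e., closed under tensoring with arbitrary perfect complexes.

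The heart of the argument is then a short support computation. First I would form the projection triangle $B \to \O_Y \to A$ of the unit object, with $B \in \mB$ and $A \in \mA$. Because $\mA$ is tensor-closed and $A \in \mA$, the object $A \otimes B$ lies in $\mA$; because $\mB$ is tensor-closed and $B \in \mB$, the same object lies in $\mB$. Semiorthogonality forces $\mA \cap \mB = 0$, so $A \otimes B = 0$. Passing to fibers, $\supp(A) \cap \supp(B) = \supp(A \otimes B) = \emptyset$, while tensoring the triangle with $\kappa(y)$ gives $B_y \to \kappa(y) \to A_y$ and shows that $A_y$ and $B_y$ cannot both vanish, so $\supp(A) \cup \supp(B) = Y$. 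The supports of perfect complexes are closed, so $Y$ is a disjoint union of two closed sets; as $Y$ is connected, one of them is empty. If $\supp(A) = \emptyset$ then $A = 0$ and the triangle gives $\O_Y \caniso B \in \mB$; since $\mB$ is a tensor-closed triangulated subcategory containing the unit, for every $E$ we have $E = \O_Y \otimes E \in \mB$, so $\mB = \Dbcoh(Y)$ and $\mA = 0$. Symmetrically, $\supp(B) = \emptyset$ forces $\mB = 0$.

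The only real content is the verification that $\Perf(Y)$ with its tensor action satisfies the hypotheses of Definition~\ref{def: nssi}, and the expected obstacle is bookkeeping rather than mathematics: confirming that both $\mA$ and $\mB$ are genuinely left-admissible $\k$-linear subcategories over which the NSSI property may be invoked, and that the tensor-closure it supplies is exactly the condition of Definition~\ref{def: closed under action}. Once the action is identified with $\otimes$, the vanishing $A \otimes B = 0$ together with the connectedness of $Y$ does all the work, and no categorical input beyond the statement of the NSSI property is needed.
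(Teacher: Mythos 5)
Your proof is correct, but it takes a genuinely different route from the paper's. Both arguments begin identically: view $\Dbcoh(Y) = \Perf(Y)$ as a $\Perf(Y)$-linear category via the tensor product and invoke the NSSI hypothesis to conclude that both components $\mA$ and $\mB$ of a semiorthogonal decomposition are closed under tensoring with arbitrary perfect complexes. At that point the paper takes a shortcut: it only uses closure under tensoring with the single line bundle $\omega_Y$ and cites \cite[Cor.~2.13]{kawatani-okawa}, which says that a component of a semiorthogonal decomposition preserved by $-\otimes\omega_Y$ (equivalently, by the Serre functor up to shift) forces the decomposition to be trivial. You instead run the classical tensor-ideal argument from scratch: $A \otimes B \in \mA \cap \mB = 0$ for the projection triangle $B \to \O_Y \to A$ of the unit, so $\supp(A)$ and $\supp(B)$ are disjoint closed sets covering $Y$ (using that derived fibers detect support and that the triangle restricted to a point cannot have both outer terms vanish), and connectedness of $Y$ finishes the job. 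Your version is self-contained and more elementary -- it avoids Serre duality and the Kawatani--Okawa machinery entirely -- at the cost of using the full strength of closure under $\Perf(Y)$ rather than just under $\omega_Y$; since NSSI supplies the full strength anyway, nothing is lost. The only implicit hypothesis you rely on is connectedness of $Y$, which is part of the paper's convention that varieties are integral (and is equally needed for the paper's own citation). Both proofs are valid.
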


\begin{proof}
  Let $\Perf(Y) = \langle \mA, \mB \rangle$ be a semiorthogonal decomposition. Consider $\Perf(Y)$ as a category linear over itself, with the action given by tensor product. By \cite[Theorem~3.1.1]{bondal-vandenbergh}, there exists a classical generator $G \in \Perf(Y)$. Then from Definition~\ref{def: nssi}, we deduce that both~$\mA$ (directly) and~$\mB$ (as the orthogonal to $\mA$) are closed under tensor products by arbitrary objects of~$\Perf(Y)$. In particular, for any two objects $A \in \mA$ and $B \in \mB$ we get $A \otimes B \in \mA \cap \mB = 0$, which implies $\supp(A) \cap \supp(B) = \emptyset$, where $\supp$ denotes the union of set-theoretic supports of cohomology sheaves of the complex. It follows that~$\RHom(A, B) = 0$, and hence the decomposition $\langle \mA, \mB \rangle$ is bidirectionally orthogonal, which on a connected scheme implies that one of the components is zero; see~\cite[Example~3.2]{bridgeland}.
\end{proof}

\begin{lemma}
  \label{lem: finite to indecomposable}
  Let $f\colon Y^\prime \to Y$ be a finite\footnote{Previous version of the paper used ``affine'' here. See Erratum at the end.} morphism between schemes such that the pushforward $f_*(\O_{Y^\prime})$ is a perfect complex on $Y$. If\, $Y$ is a quasi-compact separated NSSI scheme, then $Y^\prime$ is also a quasi-compact separated NSSI scheme.
\end{lemma}

\begin{proof}
  Since $Y$ is a quasi-compact separated scheme, by \cite[Theorem~3.1.1]{bondal-vandenbergh}, there exists a classical generator $G \in \Perf(Y)$. The morphism $f$ is affine, so by Lemma~\ref{lem: classical generator and finite morphism}, the pullback object~$f^*G \in \Perf(Y^\prime)$ is a classical generator in $\Perf(Y^\prime)$.
  
  Let $\mathfrak{D}$ be a $\Perf(Y^\prime)$-linear category. By definition, we need to check that any admissible subcategory $\mA \subset \mathfrak{D}$ is closed under the action of $\Perf(Y^\prime)$. The category $\mathfrak{D}$ can also be considered as a $\Perf(Y)$-linear category via the pullback map $f^*\colon \Perf(Y) \to \Perf(Y^\prime)$. Note that $\mathfrak{D}$ is proper over $Y$: we only need to show that the map $f$ makes $\Perf(Y^\prime)$ into a category proper over $Y$, which is true as soon as the derived pushforward functor $f_*\colon \Dbcoh(Y^\prime) \to \Dbcoh(Y)$ sends perfect complexes on $Y^\prime$ to perfect complexes on $Y$, see \cite[Lemma~4.9]{noncommutativehpd}. This condition, in turn, is satisfied since the morphism $f$ is affine and the pushforward of the structure sheaf of $Y^\prime$ is a perfect complex on $Y$ by assumption. So we conclude that $\mathfrak{D}$ is, indeed, proper over $Y$. Now by the assumption that $Y$ is NSSI we know that the subcategory $\mA$ is closed under the action of the subset $\im(f^*\colon \Perf(Y) \to \Perf(Y^\prime))$. In particular, $\mA$ is closed under the action of the classical generator $f^*G \in \Perf(Y^\prime)$.

  Let $A \in \mA$ be an object. Consider the subcategory $\mC \subset \Perf(Y^\prime)$ consisting of those objects~$C \in \Perf(Y^\prime)$ such that the result of the action $A \cdot C$  belongs to $\mA$. It is easy to see that $\mC$ is a triangulated subcategory closed under taking direct summands. Since it contains the classical generator $f^*G$, it coincides with the whole $\Perf(Y^\prime)$. This holds for an arbitrary object~$A \in \mA$; hence $\mA$ is closed under the action of $\Perf(Y^\prime)$ on~$\mathfrak{D}$. This shows that $Y^\prime$ is a~NSSI scheme.
\end{proof}

\begin{corollary}
  \label{cor: stable implies subvarieties}
  Let $Y$ be a quasi-compact separated scheme which is NSSI. Then for any connected closed subscheme~$Y^\prime \subset Y$ such that the structure sheaf $\O_{Y^\prime} \in \Dbcoh(Y)$ is a perfect complex, the category $\Perf(Y^\prime)$ is indecomposable.
\end{corollary}

\begin{proof}
  The inclusion $Y^\prime \subset Y$ is a finite morphism. By Lemma~\ref{lem: finite to indecomposable}, this implies that $Y^\prime$ is~NSSI, and hence by Lemma~\ref{lem: sanity check}, the category~$\Perf(Y^\prime)$ is indecomposable.
\end{proof}

\begin{numberedexample}
  \label{ex: not stably indecomposable}
  Let $Y$ be a K3 surface containing a smooth rational curve~$C \subset Y$. Then the derived category~$\Dbcoh(Y)$ is indecomposable, but $Y$ is not NSSI. Indecomposability follows from~\cite[Example~3.2]{bridgeland} since the canonical bundle of $Y$ is trivial and hence by Serre duality any semiorthogonal decomposition is completely orthogonal. However, $Y$ is not NSSI by Corollary~\ref{cor: stable implies subvarieties}: the curve~$C \iso \P^1$ in $Y$ admits a nontrivial semiorthogonal decomposition~$\Dbcoh(\P^1) = \langle \O, \O(1) \rangle$.
\end{numberedexample}

\section{NSSI property via rigidity of admissible subcategories}
\label{sec: topological rigidity}

In this section, we prove Theorem~\ref{thm: main theorem}. The main ingredient in the proof is a general rigidity property for admissible subcategories. It generalizes some results by Kawatani and Okawa~\cite{kawatani-okawa}. Namely, they proved the following rigidity statement. Let $Y$ be a smooth and proper variety over the field~$\k$. Let~$\mA \subset \Dbcoh(Y)$ be a left admissible subcategory. Then by~\cite[Theorem~1.4]{kawatani-okawa}, the subcategory $\mA$ is closed under tensor products by line bundles from~$\Pic^0(Y)$. The ideas for our proofs are more or less the same as in~\cite{kawatani-okawa}, but translated to a more general setting.

Informally speaking, the following theorem shows that in sufficiently nice situations, given a family of objects in some $\k$-linear category $\mathfrak{D}$, the condition that an object lies in a fixed left admissible subcategory $\mA \subset \mathfrak{D}$ is open. To make this rigorous, we need to put some restrictions on the category and the family, as seen below.

\begin{theorem}
  \label{thm: topological rigidity of admissible}
  Let $\mathfrak{D}$ be a proper category over a field $\k$ which has a classical generator. Let~$U$ be a quasi-compact separated scheme over $\k$, and let~$\mathfrak{D}_U$ be the base change $\mathfrak{D} \boxtimes \Perf(U)$ of\, $\mathfrak{D}$ from~$\k$ to~$U$.
  Let $\mA \subset \mathfrak{D}$ be a left admissible subcategory.

  Let~$F \in \mathfrak{D}_U$ be an object. Then there exists the
  largest Zariski-open subset $U^\prime \subset U$ such that the base change of\,~$F$ to~$\mathfrak{D}_{U^\prime} := \mathfrak{D} \boxtimes \Perf(U^\prime)$ lies in the subcategory $\mA_{U^\prime} := \mA \boxtimes \Perf(U^\prime)$. Moreover, a closed point $u \in U$ lies in $U^\prime$
  if and only if the base change~$F_u \in \mathfrak{D}_u$ of the object~$F$ along the morphism~$\{ u \} \monoarrow U$ lies in $\mA_u$.
\end{theorem}

\begin{proof}
  Since $\mathfrak{D}$ has a classical generator, by Lemma~\ref{lem: admissible containment criterion}, there exists an object~$E \in \mathfrak{D}$ such that~$\mA = E^\perp$. Since~$\mathfrak{D}_U$ is a~$U$-linear category which is proper over $U$, see \cite[Lemma~4.10]{noncommutativehpd}, 
  by Lemma~\ref{lem: existence of mapping objects}, 
  the internal $\Hom$-object~$\mathcal{H}om_U(E \boxtimes \O_U, F) \in \Perf(U)$ exists. Denote it by $\mathcal{H}$.

  Let $V$ be a quasi-compact and separated scheme with a morphism $f\colon V \to U$. Let us show that the base change $f^*F \in \mathfrak{D}_V$ lies in the subcategory~$\mA_V$ if and only if the pullback~$f^*\mathcal{H} \in \Perf(V)$ is a zero object. In order to do this, note that we have the base change isomorphism, see~\cite[Lemma~2.10]{noncommutativehpd},
  \[
    f^*\mathcal{H} \caniso \mathcal{H}om_V(f^*E, f^*F).
  \]
  By Lemma~\ref{lem: internal hom and orthogonals to action}, we see that $f^*\mathcal{H} = 0$ if and only if $f^*F \in \langle \Perf(V) \cdot f^*E \rangle^\perp$. Since $V$ is quasi-compact and separated,
  by~\cite[Theorem~3.1.1]{bondal-vandenbergh}, there is a classical generator~$G_V \in \Perf(V)$, and hence $f^*\mathcal{H} = 0$ if and only if $f^*F \in \langle G_V \cdot f^*E \rangle^\perp$. Note that the object $E \in \mathfrak{D}$ is a classical generator of the subcategory~${}^\perp \mA$ by construction in Lemma~\ref{lem: admissible containment criterion}.
  Then by \cite[Lemma~2.7]{noncommutativehpd}, the object~$G_V \cdot f^*(E)$ is a classical generator for the right admissible subcategory~${}^\perp \mA \boxtimes \Perf(V)$, and by Lemma~\ref{lem: deep admissible containment criterion}, we conclude that $f^*\mathcal{H} = 0$ if and only if $f^*F \in \mA_V$.

  From the discussion above, it is clear that the largest Zariski-open subset $U^\prime \subset U$ with the property that~$F|_{U^\prime} \in \mA_{U^\prime}$ is the complement to the set-theoretic support of $\mathcal{H} \in \Perf(U)$. The last claim of the theorem also follows since the embedding of a closed point $f\colon \{ u \} \to U$ satisfies~$f^*\mathcal{H} = 0$ if and only if the point $u$ is contained in~$U^\prime = U \setminus \supp(\mathcal{H})$.
\end{proof}

Let $Y$ be a scheme over a field $\k$.
We use the notation $\Pic^0(Y)$ to refer to the connected component of the identity in the Picard scheme of $Y$ over $\k$. We assume the following condition on $Y$:
\begin{equation}
  \label{eq:existence of nice pic0}
  \text{$\Pic^0(Y)$ exists, is proper, and there exists a Poincar\'e bundle $\mathcal{P}$ on $Y \times \Pic^0(Y)$.}
\end{equation}
The properness of $\Pic^0(Y)$ implies that
the Fourier--Mukai transform along the object~$\mathcal{P}$ defines a functor $\Perf(\Pic^0(Y)) \to \Perf(Y)$. While Theorem~\ref{thm: kawatani-okawa generalized} only uses the condition~\eqref{eq:existence of nice pic0} abstractly, we note some sufficient conditions that imply~\eqref{eq:existence of nice pic0}.

\begin{theorem}[\textit{cf.}~{\cite{murre}}]
  \label{thm: existence of pic0}
  Let $Y$ be a proper geometrically normal scheme over a field $\k$. Then~$\Pic^0(Y)$ exists. If moreover $Y(\k) \neq \emptyset$, then the Poincar\'e line bundle $\mathcal{P}$ exists on~$Y \times \Pic^0(Y)$, and hence $Y$ satisfies~\eqref{eq:existence of nice pic0}.
\end{theorem}
\begin{proof}
  The existence of the scheme~$\Pic^0(Y)$ is proved in \cite{murre}.
  If~$Y(\k) \neq \emptyset$, then by, \textit{e.g.},~\cite[Theorem~2.5]{kleiman-picard}, the relative Picard functor of $Y/\k$ is a Zariski sheaf, and thus the inclusion morphism $\Pic^0(Y) \monoarrow \Pic(Y)$ corresponds to a universal line bundle $\mathcal{P}$ on $Y \times \Pic^0(Y)$.
\end{proof}

Theorem~\ref{thm: topological rigidity of admissible} gives a general infinitesimal rigidity property for admissible subcategories. In a manner conceptually similar to \cite[Theorem~1.4]{kawatani-okawa}, we can deduce from it a global result for the invariance of admissible subcategories under the action of $\Pic^0$, though since we do not assume $\k$ to algebraically closed, we have to state the result in a slightly less transparent way.

\begin{theorem}
  \label{thm: kawatani-okawa generalized}
  Let $Y$ be a proper scheme over a field $\k$ that satisfies condition~\eqref{eq:existence of nice pic0}. Let $\mathfrak{D}$ be a $Y$-linear category which is proper over $Y$ and has a classical generator. Let $\mA \subset \mathfrak{D}$ be a left admissible subcategory. Let~$\mathcal{P}$ be a Poincar\'e line bundle on $Y \times \Pic^0(Y)$, and let~$\Phi_{\mathcal{P}}\colon \Perf(\Pic^0(Y)) \to \Perf(Y)$ be the Fourier--Mukai transform along $\mathcal{P}$.
  Then $\mA$ is closed under the action of\, $\im(\Phi_{\mathcal{P}}) \subset \Perf(Y)$.
\end{theorem}
\begin{remark}
  Assume that $\Pic^0(Y)$ is smooth (\textit{e.g.}, if the characteristic is zero).
  If $p$ is a~$\k$-point of~$\Pic^0(Y)$, then~$\Phi_{\mathcal{P}}(\O_{\{ p \}}) = \mathcal{L}_p$, where~$\O_{\{p\}} \in \Perf(\Pic^0(Y))$ is a skyscraper sheaf and $\mathcal{L}_p$ is the corresponding line bundle on $Y$. Hence Theorem~\ref{thm: kawatani-okawa generalized} implies that $\mA$ is closed under the action by all elements of~$\Pic^0(Y)(\k) \subset \Perf(Y)$. When the field~$\k$ is algebraically closed, this statement is in fact equivalent to Theorem~\ref{thm: kawatani-okawa generalized}, but to work with arbitrary fields, we need a more powerful result.
\end{remark}

\begin{proof}
  Consider the base change of the $Y$-linear category $\mathfrak{D}$ to the product $Y \times \Pic^0(Y)$:

  \[\begin{tikzcd}
      {\mA_{\Pic^0(Y)} \subset \mathfrak{D}_{\Pic^0(Y)}} & {\mA \subset \mathfrak{D}} \\
      {Y \times \mathrm{Pic}^0(Y)} & Y \\
      {\mathrm{Pic}^0(Y)} & {\Spec \k\rlap{.}}
      \arrow["{\pi_Y}", from=2-1, to=2-2]
      \arrow[from=3-1, to=3-2]
      \arrow[from=2-2, to=3-2]
      \arrow[dashed, no head, from=1-2, to=2-2]
      \arrow[dashed, no head, from=1-1, to=2-1]
      \arrow["{\pi_{\Pic^0(Y)}}", from=2-1, to=3-1]
    \end{tikzcd}\]

  By Lemma~\ref{lem: generators for admissible subcategories}, we can choose some classical generator~$G_\mA \in \mA$. To prove the theorem, it is enough to show that the set of objects $\im(\Phi_{\mathcal{P}}) \cdot G_\mA \subset \mathfrak{D}$ lies in the subcategory $\mA \subset \mathfrak{D}$.   We can simplify this statement using the definition of the Fourier--Mukai transform $\Phi_{\mathcal{P}}$ and the projection formula from Lemma~\ref{lem: properties of push-pull for linear categories}:
  \[
    \begin{aligned}
      \Phi_{\mathcal{P}}(-) \cdot G_\mA & \caniso \pi_{Y *}\left(\pi_{\Pic^0(Y)}^*(-) \otimes \mathcal{P}\right) \cdot G_\mA \caniso \pi_{Y *}\left(\left(\pi_{\Pic^0(Y)}^*(-) \otimes \mathcal{P}\right) \cdot \pi_Y^*(G_\mA)\right) \caniso \\
      & \caniso \pi_{Y *}\left(\pi_{\Pic^0(Y)}^*(-) \cdot (\mathcal{P} \cdot \pi_Y^*(G_\mA))\right).
    \end{aligned}
  \]
  If the object $\mathcal{P} \cdot \pi_Y^*(G_\mA) \in \mathfrak{D}_{\Pic^0(Y)}$ lies in the subcategory $\mA_{\Pic^0(Y)}$, then we are done: the action of $\Perf(\Pic^0(Y))$ preserves $\mA_{\Pic^0(Y)}$ by definition, and the pushforward functor~$\pi_{Y *}$ sends~$\mA_{\Pic^0(Y)}$ to~$\mA$ by the second part of Lemma~\ref{lem: properties of push-pull for linear categories}, so we would conclude that~$\im(\Phi_{\mathcal{P}})\cdot G_\mA \subset \mA$. Thus, it remains to prove that~$\mathcal{P} \cdot \pi_Y^*(G_\mA) \in \mA_{\Pic^0(Y)}$.

  Since $\mathfrak{D}$ is proper over $Y$ and $Y$ is proper over the base field, by Theorem~\ref{thm: topological rigidity of admissible}, there exists the
  largest Zariski-open subset~$U \subset \Pic^0(Y)$ such that the restriction~$(\mathcal{P} \cdot \pi_Y^*(G_\mA))|_U \in \mathfrak{D}_U$ lies in the subcategory~$\mA_U \subset \mathfrak{D}_U$. Since the restriction of the object~$\mathcal{P} \cdot \pi_Y^*(G_\mA) \in \mathfrak{D}_{\Pic^0(Y)}$ to the origin $\{ \O_Y \} \subset \Pic^0(Y)$ is the object $G_\mA \in \mathfrak{D}$, which by definition lies in $\mA$, by the second part of Theorem~\ref{thm: topological rigidity of admissible}, we know that $U$ contains the origin, and in particular is not empty.

  We claim that $U$ is closed under multiplication in the sense that for any finite field extension~$L \supset \k$, the set of~$L$-points $U(L) \subset \Pic^0(Y)(L)$ is closed under multiplication. To simplify the notation, we only spell out the full argument for~$\k$-points. By the definition of the Poincar\'e line bundle~$\mathcal{P}$, the restriction of the object~$\mathcal{P} \cdot \pi_Y^*(G_\mA) \in \mathfrak{D}_{\Pic^0(Y)}$ to a point~$p \in \Pic^0(Y)(\k)$ corresponding to a line bundle~$\mathcal{L}_p \in \Perf(Y)$ is the object $\mathcal{L}_p \cdot G_\mA$ in the category $\mathfrak{D}$. Thus by the second part of Theorem~\ref{thm: topological rigidity of admissible}, it is enough to show that if \mbox{$\mathcal{L}_1$, $\mathcal{L}_2$} are two objects in $\Perf(Y)$ such that $\mathcal{L}_i \cdot G_\mA \in \mA$ for $i = 1, 2$, then $(\mathcal{L}_1 \otimes \mathcal{L}_2) \cdot G_\mA \in \mA$. Note that by the definition of $G_\mA$, we have $\mA = \langle G_\mA \rangle$, so we see that
  \[
    (\mathcal{L}_1 \otimes \mathcal{L}_2) \cdot G_\mA = \mathcal{L}_1 \cdot (\mathcal{L}_2 \cdot G_\mA) \in \mathcal{L}_1 \cdot \mA = \langle \mathcal{L}_1 \cdot G_\mA \rangle \subset \mA.
  \]
  The case of a nontrivial finite extension~$L \supset \k$ is handled similarly, using line bundles on~$Y_L$ and the fact that the base change $(G_\mA)_L \caniso G_\mA \boxtimes \O_L$ is a generator for $\mA_L$ since the morphism~$\Spec L \to \Spec \k$ is affine.

  Thus $U$ is a nonempty Zariski-open subset of $\Pic^0(Y)$ which is closed under multiplication. This implies that $U = \Pic^0(Y)$ by the standard argument (the image of the multiplication map $U \times U \to \Pic^0(Y)$ already contains all closed points); \textit{i.e.}, the object $\mathcal{P} \cdot \pi_Y^*(G_\mA)$ lies in the subcategory $\mA_{\Pic^0(Y)}$,  which is what we wanted to show.
\end{proof}

Now we deduce Theorem~\ref{thm: main theorem} from Theorem~\ref{thm: kawatani-okawa generalized}.

\begin{proposition}
  \label{prop: abelian varieties are nssi}
  Let $A$ be an abelian variety over a field $\k$. Then $A$ is a NSSI variety.
\end{proposition}

\begin{proof}
  Let $\mathfrak{D}$ be a $\Perf(A)$-linear category which is proper over $A$ and has a classical generator. Let $\mA \subset \mathfrak{D}$ be a left admissible subcategory of $\mathfrak{D}$. According to Definition~\ref{def: nssi}, we need to show that $\mA$ is closed under the action of $\Perf(A)$.
  
  Let $\hat{A}$ be the dual abelian variety.
  Consider the functor
  \[
    \Phi\colon \Perf(\hat{A}) \lra \Perf(A)
  \]
  given by the Fourier--Mukai transform with respect to the Poincar\'e line bundle $\mathcal{P}$ on $A \times \hat{A}$. Since $A$ satisfies the condition~\eqref{eq:existence of nice pic0}, by Theorem~\ref{thm: kawatani-okawa generalized}, the subcategory $\mA \subset \mathfrak{D}$ is closed under the action of $\im(\Phi) \subset \Perf(A)$. By \cite{mukai-fm}, the functor $\Phi$ is an equivalence of categories, in particular essentially surjective. Thus $\mA \subset \mathfrak{D}$ is closed under the action of $\Perf(A)$, and hence $A$ is NSSI.
\end{proof}

\begin{theorem}[{ = Theorem~\textup{\ref{thm: main theorem}}}]
  \label{impl: main theorem}
  Let $Y$ be a scheme over a field $\k$. If\, $Y$ admits a finite\footnote{Previous version of the paper used ``affine'' here. See Erratum at the end.} morphism to an abelian variety over $\k$, then $Y$ is NSSI.
\end{theorem}

\begin{remark}
  If $Y$ is a smooth projective variety and $\k$ is algebraically closed, then the condition in Theorem~\ref{impl: main theorem} is satisfied if and only if the Albanese morphism of $Y$ is a finite morphism.
\end{remark}

\begin{proof}
  Let $f\colon Y \to A$ be a finite morphism to an abelian variety. By Proposition~\ref{prop: abelian varieties are nssi}, the variety $A$ is NSSI. Since $f$ is finite, Lemma~\ref{lem: finite to indecomposable} implies that $Y$ is also NSSI.
\end{proof}

\section{Families of NSSI schemes}
\label{sec: families of NSSI}

In this section, we show that the converse to Theorem~\ref{thm: main theorem} does not hold. In Theorem~\ref{impl: family of nssi varieties}, we give a new way to construct NSSI schemes, and the resulting schemes do not necessarily admit finite maps to abelian varieties. Namely, we show that the total space of a flat proper family of NSSI schemes over a NSSI base is itself a NSSI scheme. This can be applied, for example, to the Albanese map of a bielliptic surface: all fibers are elliptic curves, and the base of the fibration is an elliptic curve. This example is discussed in more detail in Corollary~\ref{cor: bielliptic}.

\begin{theorem}[{ = Theorem~\textup{\ref{thm: family of nssi varieties}}}]
  \label{impl: family of nssi varieties}
  Let $\pi\colon Y \to B$ be a flat proper morphism of quasi-compact separated schemes over a field $\k$. Assume that~$B$ is a regular\footnote{Previous version of the paper omitted this hypothesis. See Erratum at the end.} NSSI scheme and that for any closed point~$b \in B$, the fiber~$Y_b := \pi^{-1}(b)$ is a NSSI scheme over the field $\k(b)$.
  Then~$Y$ is NSSI.
\end{theorem}

\begin{proof}
  Let $\mathfrak{D}$ be a $Y$-linear category which is proper over $Y$ and has a classical generator. Let~$\mA \subset \mathfrak{D}$ be a left admissible subcategory. Via the morphism~$\pi\colon Y \to B$, we may consider~$\mathfrak{D}$ as a~$B$-linear category. Since~$\pi$ is a proper flat morphism, the category $\mathfrak{D}$ is proper over $B$; see \cite[Lemma~4.9]{noncommutativehpd}. By assumption, $B$ is NSSI, which implies that the left admissible subcategory $\mA \subset \mathfrak{D}$ is closed under the action of $\Perf(B)$. By Proposition~\ref{prop: induced linearity homotopically}, this implies that $\mA$ can be considered as a $B$-linear category. In particular, it is valid to consider the base change of $\mA$ along an arbitrary map to $B$.

  Pick some closed point $b \in B$, and let $j\colon Y_b \monoarrow Y$ be the fiber over that point. Consider the base change diagram:
  \[\begin{tikzcd}
      {\mA_b \subset \mathfrak{D}_b} & {\mA \subset \mathfrak{D}} \\
      {Y_b} & Y \\
      {\{ b \}} & B\rlap{.}
      \arrow[hook, from=3-1, to=3-2]
      \arrow["j", hook, from=2-1, to=2-2]
      \arrow[from=2-2, to=3-2]
      \arrow[dashed, no head, from=1-2, to=2-2]
      \arrow[dashed, no head, from=1-1, to=2-1]
      \arrow[from=2-1, to=3-1]
    \end{tikzcd}\]
  
  Here $\mA_b$ and $\mathfrak{D}_b$ are defined as base changes of $B$-linear categories $\mA$ and $\mathfrak{D}$, respectively, along the inclusion $\{ b \} \monoarrow B$. Since $B$ is assumed to be regular, the inclusion of the point $\{ b \} \monoarrow B$ is a perfect morphism, meaning that the derived pushforward sends perfect complexes to perfect complexes. A flat base change of a perfect morphism is a perfect morphism \cite[Tag~0688]{stacks-project}, so the inclusion morphism $Y_b \monoarrow Y$ also has the property that the pushforward of a perfect complex is a perfect complex on $Y$. By \cite[Lemma~3.17]{noncommutativehpd}, the subcategory $\mA_b$ is left admissible in~$\mathfrak{D}_b$.  Since
  the category $\mathfrak{D}_b$ is obtained from $\mathfrak{D}$ by the base change along an affine morphism~$\{ b \} \monoarrow B$, the pullback of the classical generator of $\mathfrak{D}$ is a classical generator of the category~$\mathfrak{D}_b$ by~\cite[Lemma~2.7]{noncommutativehpd}.

  Moreover, since $\mathfrak{D}$ is equipped with a $Y$-linear structure, by Lemma~\ref{lem: change of bases and base change}, the category~$\mathfrak{D}_b$, which as defined is only $\{ b \}$-linear, has the induced structure of a $Y_b$-linear category as the base change of $\mathfrak{D}$ along the morphism $j\colon Y_b \monoarrow Y$. Thus over $Y_b$, we have a $Y_b$-linear category $\mathfrak{D}_b$ which is proper over~$Y_b$ and has a classical generator, and a left admissible subcategory~$\mA_b \subset \mathfrak{D}_b$. By assumption, the fiber~$Y_b$ is~NSSI, so we conclude that~$\mA_b$ is closed under the action of~$\Perf(Y_b)$ on~$\mathfrak{D}_b$.

  By Lemma~\ref{lem: admissible containment criterion}, there exists an object $E \in \mathfrak{D}$ such that $E^\perp = \mA$. To show that $\mA$ is closed under the action of $\Perf(Y)$, it is enough to prove that for any $A \in \mA$, we have the inclusion~$\Perf(Y) \cdot A \in E^\perp$. By Lemma~\ref{lem: internal hom and orthogonals to action}, this is equivalent to the fact that the mapping object $\mathcal{H}om(E, A) \in \Perf(Y)$ is zero.

  Recall from Definition~\ref{def: induced functors for linear categories} that the morphism $j$ induces the pullback functor $j^*\colon \mathfrak{D} \to \mathfrak{D}_b$. We have a compatibility result for mapping objects, see \cite[Lemma~2.10]{noncommutativehpd}:
  \[
    j^*\mathcal{H}om(E, A) = \mathcal{H}om(j^*E, j^*A).
  \]
  By Definition~\ref{def: induced functors for linear categories} we know that $j^*A \in \mA_b$ and that $j^*E \in {}^\perp \mA_b$. Note that this pullback map is well-defined even if the pushforward $j_*$ does not send perfect complexes to perfect complexes, which we cannot guarantee without assuming the smoothness of $B$. The subcategory $\mA_b \subset \mathfrak{D}_b$ is closed under the action of $\Perf(Y_b)$; in particular, $\Perf(Y_b) \cdot j^*A \in \mA_b$. Thus, applying Lemma~\ref{lem: internal hom and orthogonals to action} on $Y_b$, we see that $j^*\mathcal{H}om(E, A) = 0$.

  Since $b$ is an arbitrary closed point of $B$, we conclude that the pullback of the mapping object~$\mathcal{H}om(E, A) \in \Perf(Y)$ to a fiber over any closed point of~$B$ is zero; hence~$\mathcal{H}om(E, A) = 0$ for any object $A \in \mA$. As discussed above, this implies that $\mA$ is closed under the action of~$\Perf(Y)$, and thus $Y$ is a NSSI scheme, as claimed in the statement.
\end{proof}

As an explicit application of the theorem, we show that bielliptic surfaces are NSSI varieties, even though their Albanese morphism is not finite.

\begin{corollary}
  \label{cor: bielliptic}
  Any bielliptic surface over an algebraically closed field $\k$ is a NSSI variety.
\end{corollary}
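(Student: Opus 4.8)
The plan is to realize a bielliptic surface as the total space of a smooth family of elliptic curves over an elliptic curve, and then invoke Theorem~\ref{impl: family of nssi varieties}. Recall that a bielliptic surface $S$ over $\k$ can be written as a quotient $(E \times F)/G$, where $E$ and $F$ are elliptic curves and $G$ is a finite group acting on $E$ by translations and on $F$ in such a way that $F/G \iso \P^1$. Since nontrivial translations of an elliptic curve have no fixed points, the diagonal $G$-action on $E \times F$ is free, so $S$ is smooth and $E \times F \to S$ is an \'etale Galois cover.

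The key point is to choose the right fibration. The surface $S$ carries two natural elliptic fibrations coming from the two projections of $E \times F$. The projection onto $F$ descends to a map $S \to F/G \iso \P^1$, but this is useless here: the base $\P^1$ is \emph{not} NSSI, since $\Dbcoh(\P^1) = \langle \O, \O(1) \rangle$ is decomposable. Instead I would use the projection onto $E$, which descends to the Albanese morphism $\pi := \mathrm{alb}_S \colon S \to \Alb(S) = E/G$. Because $G$ acts on $E$ by translations, the quotient $B := E/G$ is again an elliptic curve, and $\pi$ is a smooth proper morphism all of whose fibers are isomorphic to $F$; indeed, the stabilizer of every point of $E$ is trivial, so each fiber is a single copy of $F$. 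In particular $\pi$ is flat and proper with smooth elliptic-curve fibers.

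It then remains to verify the hypotheses of Theorem~\ref{impl: family of nssi varieties}. The base $B = \Alb(S)$ is an elliptic curve, so its Albanese morphism is an isomorphism and in particular finite; hence $B$ is NSSI by Theorem~\ref{impl: main theorem}. Likewise, each fiber $S_b \iso F$ is an elliptic curve with finite (indeed, isomorphism) Albanese morphism, so every fiber is a smooth proper NSSI variety by the same theorem. With $\pi \colon S \to B$ flat and proper, $B$ NSSI, and all fibers NSSI, Theorem~\ref{impl: family of nssi varieties} applies directly and yields that $S$ is NSSI. The only point requiring genuine care is the identification of $\pi$ as a flat proper morphism with smooth fibers isomorphic to the elliptic curve $F$, which follows from the freeness of the $G$-action on the $E$-factor; once this fibration is correctly set up, and once one avoids the decomposable base $\P^1$ of the other natural projection, the argument is a direct application of the two main theorems.
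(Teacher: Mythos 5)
Your proof is correct and takes essentially the same route as the paper: both identify the Albanese morphism of the bielliptic surface as a flat proper fibration of an elliptic curve over an elliptic curve, note that elliptic curves are NSSI by Theorem~\ref{impl: main theorem}, and conclude by Theorem~\ref{impl: family of nssi varieties}. The extra detail you supply about the quotient construction $(E \times F)/G$ and the unusability of the other fibration over $\P^1$ is accurate but not needed beyond the citation of the standard structure theory.
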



\begin{proof}
  Let $Y$ be a bielliptic surface. Recall from~\cite[Chapter~VI]{beauville} that the Albanese morphism of $Y$ is a fibration $f\colon Y \to E$ for some elliptic curve $E$, and each fiber is isomorphic to some fixed elliptic curve $F$. Since elliptic curves are NSSI by Theorem~\ref{thm: main theorem}, we can apply Theorem~\ref{thm: family of nssi varieties} to the Albanese morphism of $Y$ to conclude that $Y$ is a NSSI variety.
\end{proof}

\section{Products and fibrations}
\label{sec: products}

In this section, we discuss some applications of the NSSI property. We restrict ourselves to the class of smooth projective varieties, and in this section, we assume that $\k$ is an algebraically closed field of characteristic zero. In Lemma~\ref{lem: admissible subcategories in products}, we show that NSSI varieties are stably indecomposable in the following naive sense: if $Y$ is a~NSSI variety, then for an arbitrary smooth projective variety~$X$, any admissible subcategory in~$\Dbcoh(X \times Y)$ is induced by an admissible subcategory in~$\Dbcoh(X)$. Note that when applied to the case~$X = \{ \mathrm{pt} \}$, this result recovers Lemma~\ref{lem: sanity check}.

After that, in Proposition~\ref{impl: no phantoms}, we use the NSSI property to deduce the nonexistence of phantom subcategories in some varieties, such as total spaces of~$\P^1$- and~$\P^2$-families over~NSSI varieties. For example, this shows that there are no phantom subcategories in the surface~$\P^1 \times E$, where~$E$ is an elliptic curve.

We start with an observation about linear admissible subcategories. Given a morphism~$\pi\colon \mathfrak{X} \to U$, a~$U$-linear admissible subcategory in $\Dbcoh(\mathfrak{X})$ can be thought of as a family of admissible subcategories in the fibers of the morphism $\pi$. The moduli spaces of such fiberwise admissible subcategories have been studied in the paper~\cite{BOR}. When applied to a special situation where $\pi$ is an \'etale-locally trivial fibration, their results imply the following.

\begin{lemma}
  \label{lem: bor injection}
  Let $\pi\colon \mathfrak{X} \to U$ be a smooth and proper morphism which is an \'etale-locally trivial fibration with fiber $X$. Assume that $U$ is a connected excellent scheme over $\Q$ and that $\pi$ admits a relative ample invertible sheaf. Then for any point $u \in U$, the base change map
  \[
    \left\{
      \parbox{3.5cm}{
        \begin{center}
          \normalfont
          $U$-linear admissible \\ subcategories \\ $\mA \subset \Dbcoh(\mathfrak{X})$
        \end{center}
      }
    \right\}
    \xrightarrow{\text{\normalfont restriction to $\mathfrak{X}_u \iso X$}}
    \left\{
      \parbox{3.5cm}{
        \begin{center}
          \normalfont
          admissible \\ subcategories \\
          $\mA_X \subset \Dbcoh(X)$
        \end{center}
      }
    \right\}
  \]
  is an injection.
\end{lemma}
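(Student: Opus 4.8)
The plan is to interpret $U$-linear admissible subcategories of $\Dbcoh(\mathfrak{X})$ as sections of a moduli space over $U$ and then to exploit the separatedness and unramifiedness of that moduli space, both of which are supplied by \cite{BOR}. First I would recall from \cite{BOR} that under the stated hypotheses---$\pi$ smooth and proper with a relative ample invertible sheaf, $U$ excellent and over $\Q$---the moduli space $\mathcal{M} \to U$ parametrizing $U$-linear admissible subcategories of $\Dbcoh(\mathfrak{X})$ is representable by an algebraic space which is \'etale and separated over $U$. The content of this is threefold: a $U$-linear admissible subcategory $\mA \subset \Dbcoh(\mathfrak{X})$ determines a section $s_\mA\colon U \to \mathcal{M}$; distinct subcategories give distinct sections; and for any point $u \in U$ the fiber $\mathcal{M}_u$ is canonically the set of admissible subcategories of $\Dbcoh(\mathfrak{X}_u)$, with evaluation $s \mapsto s(u)$ computing the base change. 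Under this dictionary the restriction map in the statement is precisely $s_\mA \mapsto s_\mA(u)$, so proving injectivity amounts to showing that two sections $s_1, s_2$ of $\mathcal{M} \to U$ with $s_1(u) = s_2(u)$ coincide.

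Next I would run the standard rigidity argument for sections of a separated unramified morphism over a connected base. Because $\mathcal{M} \to U$ is \'etale, it is in particular unramified, so its diagonal $\Delta\colon \mathcal{M} \to \mathcal{M} \times_U \mathcal{M}$ is an open immersion; because it is separated, $\Delta$ is also a closed immersion. Hence the equalizer of the two sections, namely the preimage of $\Delta(\mathcal{M})$ under $(s_1, s_2)\colon U \to \mathcal{M} \times_U \mathcal{M}$, is an open and closed subscheme of $U$. It contains $u$ by hypothesis, and since $U$ is connected it must be all of $U$. Therefore $s_1 = s_2$, and by the representability dictionary this gives $\mA_1 = \mA_2$, establishing the injection.

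The main obstacle is not this topological rigidity step, which is formal, but rather invoking the representability input correctly and checking that the hypotheses of the lemma match those required by \cite{BOR}. The \'etale-local triviality with constant fiber $X$ plays a double role: it guarantees that every fiber $\mathcal{M}_u$ is identified with the fixed set of admissible subcategories of $\Dbcoh(X)$, and it is what forces $\mathcal{M} \to U$ to be genuinely \'etale rather than merely unramified. The relative ampleness, excellence, and characteristic-zero assumptions are exactly the conditions feeding into the \cite{BOR} theorem that the moduli functor is represented by a separated \'etale algebraic space; once this is in place, the argument above applies verbatim.
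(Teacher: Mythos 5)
Your overall strategy is the same as the paper's: identify $U$-linear admissible subcategories with sections of the algebraic space $\mathrm{SOD}_\pi \to U$ from \cite[Th.~A]{BOR}, and then use rigidity of sections of a separated \'etale morphism over a connected base. The rigidity step itself is fine. The genuine gap is in where you get separatedness from. You assert that \cite{BOR} supplies a moduli space that is ``\'etale \emph{and separated}'' over $U$, but Theorem~A of that paper only gives an algebraic space \'etale over $U$; the authors there explicitly observe that $\mathrm{SOD}_\pi$ is in general \emph{not} separated (nor quasi-compact). Without separatedness the equalizer of your two sections $s_1, s_2$ is only open in $U$, not closed, and the connectedness argument collapses --- indeed, for a non-separated \'etale space two distinct sections can agree at a point. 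So the separatedness claim is exactly the point that needs an argument, and your proof does not provide one.

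The paper's proof closes this gap using the \'etale-local triviality hypothesis: since $\pi$ is \'etale-locally trivial, the space $\mathrm{SOD}_\pi \to U$ is by construction \'etale-locally trivial as well, hence \'etale-locally a disjoint union of copies of the base, and therefore separated (separatedness can be checked \'etale-locally on the target). Relatedly, you assign the wrong role to the \'etale-local triviality assumption when you say it is ``what forces $\mathcal{M} \to U$ to be genuinely \'etale rather than merely unramified'': \'etaleness already holds by \cite[Th.~A]{BOR} for any smooth proper $\pi$ with a relatively ample sheaf over an excellent $\Q$-scheme. The local triviality is needed precisely to identify all fibers of $\mathrm{SOD}_\pi$ with the fixed set of admissible subcategories of $\Dbcoh(X)$ and, crucially, to obtain separatedness. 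With that correction your argument becomes the paper's proof.
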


\begin{proof}
  By \cite[Theorem~A]{BOR}, there exists an algebraic space $\mathrm{SOD}_\pi \to U$ which is \'etale over $U$ such that the set of global sections~$\mathrm{SOD}_\pi(U)$ is the set of $U$-linear admissible subcategories of~$\Dbcoh(\mathfrak{X})$ and the fiber of~$\mathrm{SOD}_\pi$ over the point~$u \in U$ is the set of admissible subcategories in~$\Dbcoh(X)$. Since $\pi$ is assumed to be \'etale-locally trivial, the \'etale morphism $\mathrm{SOD}_\pi \to U$ is by construction also \'etale-locally trivial over~$U$, in particular separated; see \cite[Tag~02KU]{stacks-project}. A global section of a separated \'etale morphism over a connected scheme is uniquely determined by its value over a single point $u$, and this is exactly the claim of the lemma.
\end{proof}

\begin{lemma}
  \label{lem: linear subcategory in product}
  Let $X$ and $Y$ be smooth projective varieties over a field $\k$ of characteristic zero. Let $\pi\colon X \times Y \to Y$ be the projection morphism. Let $\mA \subset \Dbcoh(X \times Y)$ be an admissible subcategory. If $\mA$ is closed under the action of $\pi^*(\Dbcoh(Y)) \subset \Dbcoh(X \times Y)$, then there exists an admissible subcategory~$\mA_X \subset \Dbcoh(X)$ such that~$\mA = \mA_X \boxtimes \Dbcoh(Y)$.
\end{lemma}

\begin{proof}
  By Proposition~\ref{prop: induced linearity homotopically}, the subcategory $\mA \subset \Dbcoh(X \times Y)$ can be equipped with $Y$-linear structure. Pick a point~$y \in Y$. By Lemma~\ref{lem: bor injection}, the subcategory~$\mA$ is uniquely determined by its restriction to the fiber $\pi^{-1}(y)$, which is an admissible subcategory~$\mA_X \subset \Dbcoh(X)$. Since the base change~$\mA_X \boxtimes \Dbcoh(Y)$ is a~$Y$-linear admissible subcategory of $\Dbcoh(X \times Y)$ with the same restriction to the fiber, we conclude that~$\mA = \mA_X \boxtimes \Dbcoh(Y)$.
\end{proof}

\begin{lemma}
  \label{lem: admissible subcategories in products}
  Let $Y$ be a smooth projective variety which is NSSI. Let $X$ be any smooth projective variety, and let $\mA \subset \Dbcoh(X \times Y)$ be an admissible subcategory. Then there exists an admissible subcategory $\mA_X \subset \Dbcoh(X)$ such that~$\mA = \mA_X \boxtimes \Dbcoh(Y)$.
\end{lemma}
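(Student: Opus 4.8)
The plan is to first use the hypothesis that $Y$ is NSSI to promote $\mA$ to a $Y$-linear subcategory of $\Dbcoh(X \times Y)$, and then to appeal to the rigidity of fiberwise admissible subcategories in Lemma~\ref{lem: bor injection} to identify $\mA$ with the constant family determined by its restriction to a single fiber.

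First I would regard $\mathfrak{D} := \Dbcoh(X \times Y) = \Perf(X \times Y)$ as a $\Perf(Y)$-linear category via the projection $\pr_Y\colon X \times Y \to Y$, with action $F \cdot S = F \otimes \pr_Y^* S$. This category is proper over $Y$, since $\pr_Y$ is proper, and it admits a classical generator because $X \times Y$ is smooth projective. The subcategory $\mA$ is admissible, in particular left-admissible and $\k$-linear, so the NSSI property of $Y$ (Definition~\ref{def: nssi}) applies and shows that $\mA$ is closed under the action of $\Perf(Y)$. By Proposition~\ref{prop: induced linearity homotopically}, $\mA$ then carries a canonical $Y$-linear structure for which the inclusion $\mA \hookrightarrow \mathfrak{D}$ is $Y$-linear; in other words, $\mA$ is a $Y$-linear admissible subcategory of $\Dbcoh(X \times Y)$.

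Next I would fix a $\k$-point $y_0 \in Y$ and let $\mA_X \subset \Dbcoh(X)$ be the restriction of $\mA$ to the fiber $X \times \{y_0\} \cong X$, i.e. its image under the restriction map of Lemma~\ref{lem: bor injection}; this is an admissible subcategory of $\Dbcoh(X)$. I would then form the constant family $\mA_X \boxtimes \Dbcoh(Y)$, which is a $Y$-linear admissible subcategory of $\Dbcoh(X \times Y)$ (it is one component of the base change along $X \times Y \to X$ of the semiorthogonal decomposition of $\Dbcoh(X)$ cut out by $\mA_X$), and whose restriction to the fiber over $y_0$ is again $\mA_X$. Finally I would apply Lemma~\ref{lem: bor injection} to the trivial fibration $\pr_Y\colon X \times Y \to Y$, which is smooth, proper, and (globally, hence \'etale-locally) trivial with fiber $X$, carries a relative ample sheaf pulled back from $X$, and has connected base $Y$: both $\mA$ and $\mA_X \boxtimes \Dbcoh(Y)$ are $Y$-linear admissible subcategories whose restrictions to $X \times \{y_0\}$ equal $\mA_X$, so the injectivity of the restriction map forces $\mA = \mA_X \boxtimes \Dbcoh(Y)$.

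The main obstacle is conceptual rather than computational. Step~1 and the construction of the constant family are essentially formal, but one must carefully match the "restriction to a fiber" appearing in the definition of $\mA_X$ with the restriction map of Lemma~\ref{lem: bor injection}, and confirm that the $Y$-linear structure on $\mA$ produced by Proposition~\ref{prop: induced linearity homotopically} is the one recorded by the moduli space $\mathrm{SOD}_{\pr_Y}$ of the paper~\cite{BOR}. One should also keep track of the standing hypotheses of Lemma~\ref{lem: bor injection}, namely connectedness of $Y$ and excellence over $\Q$, which place us in the connected, characteristic-zero setting where the subsequent applications are made.
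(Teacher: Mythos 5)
Your proposal follows essentially the same route as the paper's proof: use the NSSI property together with Proposition~\ref{prop: induced linearity homotopically} to make $\mA$ a $Y$-linear admissible subcategory, then apply Lemma~\ref{lem: bor injection} to the trivial fibration $X \times Y \to Y$ to conclude that $\mA$ agrees with the constant family $\mA_X \boxtimes \Dbcoh(Y)$ since both restrict to $\mA_X$ over a chosen point. Your closing remarks about matching the two notions of fiberwise restriction and about the standing hypotheses of Lemma~\ref{lem: bor injection} (connectedness, characteristic zero) are apt, as the paper's own proof relies on the same implicit assumptions.
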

\begin{proof}
  Consider the projection morphism
  $
    \pi\colon X \times Y \to Y
  $.
  The category $\Dbcoh(X \times Y)$ thus may be viewed as a $Y$-linear category, with action given by the tensor product with the pullbacks along $\pi$. Since $Y$ is assumed to be a NSSI variety, the subcategory~$\mA$ is closed under the action of $\Perf(Y)$. Then we are done by Lemma~\ref{lem: linear subcategory in product}.
\end{proof}

This work started with the following application in mind. It provides many new examples of varieties which have nontrivial semiorthogonal decompositions but do not contain any phantom subcategories.

\begin{proposition}[{ = Proposition~\textup{\ref{prop: no phantoms}}}]
  \label{impl: no phantoms}
  Let $\k$ be an algebraically closed field of characteristic zero. Let $Y$ be a smooth projective variety over $\k$ which is NSSI.
  \begin{enumerate}
  \item\label{imp: no phantoms 1} Let~$X$ be either the projective line~$\P^1$ or a del Pezzo surface. Then there are no phantom subcategories in the derived category $\Dbcoh(X \times Y)$.
  \item\label{imp: no phantoms 2} Let~$\pi\colon \mathfrak{X} \to Y$ be an \'etale-locally trivial fibration with fiber~$\P^1$ or~$\P^2$. Then there are no phantom subcategories in the derived category~$\Dbcoh(\mathfrak{X})$.
  \end{enumerate}
\end{proposition}

\begin{proof}
  We deal with part~\eqref{imp: no phantoms 1} first. Let $\mA \subset \Dbcoh(X \times Y)$ be an admissible subcategory. Since~$Y$ is assumed to be NSSI, by Lemma~\ref{lem: admissible subcategories in products}, there exists an admissible subcategory~$\mA_X \subset \Dbcoh(X)$ such that $\mA = \mA_X \boxtimes \Dbcoh(Y)$. In particular, $\mA$ contains pullbacks of all objects in $\mA_X$ along the projection $p\colon X \times Y \to X$. Since the pullback map $p^*\colon K_0(X) \to K_0(X \times Y)$ is an injection,~$\mA$ is a phantom in $\Dbcoh(X \times Y)$ only if $\mA_X$ is a phantom in $\Dbcoh(X)$.

  By assumption, $X$ is either $\P^1$ or a del Pezzo surface. For $\P^1$, it is easy to show that any nontrivial admissible subcategory in $\Dbcoh(\P^1)$ is generated by a single line bundle $\O_{\P^1}(n)$, and in particular there are no phantom subcategories. If $X$ is a del Pezzo surface, then under our assumptions on the base field, $\Dbcoh(X)$ has no phantom subcategories by \cite[Theorem~6.35]{my-delpezzo}. So in both cases, there are no phantoms in~$\Dbcoh(X)$. Thus~$\mA$ is not a phantom subcategory.

  Now consider part~\eqref{imp: no phantoms 2}. Let $\mA \subset \Dbcoh(\mathfrak{X})$ be an admissible subcategory. By Definition~\ref{def: nssi} and Proposition~\ref{prop: induced linearity homotopically}, it is a $Y$-linear subcategory. Note that since the fibers of the morphism $\pi$ are projective spaces, the relative anticanonical bundle provides a relatively ample line bundle. For any \'etale-locally trivial fibration admitting a relatively ample line bundle, Lemma~\ref{lem: bor injection} shows that $\mA$ is uniquely determined by its restriction $\mA_X$ to some fiber $j\colon X \subset \mathfrak{X}$. However, since the base change procedure in Lemma~\ref{lem: bor injection} involves not only the restriction of objects from $\mA$ to the fiber, but also the closure of the resulting subcategory with respect to direct summands, it is in principle possible that a phantom subcategory in the total space may restrict to a nonphantom subcategory in the fiber.

  Another compatibility condition, proved in \cite[Theorem~5.6]{kuznetsov-basechange}, is that for any object $A \in \mA_X$, the pushforward $j_*A \in \Dbcoh(\mathfrak{X})$ lies in $\mA$. For general \'etale-locally trivial fibrations, the induced pushforward map $j_*\colon K_0(X) \to K_0(\mathfrak{X})$ does not have to be an injection. However, for the cases considered in~\eqref{imp: no phantoms 2}, it is, as we show below.

  Assume that $\pi\colon \mathfrak{X} \to Y$ is an \'etale-locally trivial fibration whose fiber is a projective space~$\P^n$. By the Blanchard--Deligne theorem~\cite[Proposition~(2.1) and Section~(2.6.2)]{deligne} the Leray--Serre spectral sequence computing the cohomology of the total space $\mathfrak{X}$ degenerates at $E_2$, and the answer is given in terms of the local systems formed by the cohomology of the fibers of the map $\pi$. Since all fibers are projective spaces, the cohomology is generated by a unique primitive ample class; hence the action of the (\'etale) fundamental group of $Y$ on the cohomology $H^*(\P^n)$ can only be trivial. Thus $H^*(\mathfrak{X})$ is abstractly isomorphic to the tensor product $H^*(Y) \otimes H^*(\P^n)$, and the Gysin morphism $j_*\colon H^*(\P^n) \to H^*(\mathfrak{X})$ is an injection. Since the normal bundle to the fiber $j\colon \P^n \monoarrow \mathfrak{X}$ is trivial, by the Grothendieck--Riemann--Roch theorem, the pushforward morphism on $K$-theory~$j_*\colon K_0(\P^n) \to K_0(\mathfrak{X})$ can be computed by taking Chern characters and applying the Gysin map; thus in particular it also is an injection.

  To sum up, any admissible subcategory $\mA \subset \Dbcoh(\mathfrak{X})$ is uniquely determined by the admissible subcategory $\mA_X \subset \Dbcoh(X)$ on some fiber. If $X$ is $\P^1$ or $\P^2$, then as explained in part~\eqref{imp: no phantoms 1}, there are no phantom subcategories in $\Dbcoh(X)$. In particular, $\mA_X$ contains some object $A \in \mA_X$ with nonzero class in $K_0(X)$. The object $j_*A$ lies in $\mA$, and as argued above, when $X$ is a projective space, the class of the pushforward in $K_0(\mathfrak{X})$ is nonzero; hence $\mA$ is not a phantom subcategory.
\end{proof}

\providecommand{\bysame}{\leavevmode\hbox to3em{\hrulefill}\thinspace}
\providecommand{\MR}{\relax\ifhmode\unskip\space\fi MR }
\providecommand{\MRhref}[2]{%
  \href{http://www.ams.org/mathscinet-getitem?mr=#1}{#2}
}
\providecommand{\href}[2]{#2}

\newpage

\section*{Erratum for the previous version of the paper}
\addcontentsline{toc}{section}{Erratum for the previous version of the paper}

The previous version of the paper erroneously stated Theorem~\ref{thm: main theorem} for schemes admitting an affine morphism into an abelian variety instead of a finite morphism. However, the key rigidity result, Theorem~\ref{thm: topological rigidity of admissible}, only works for proper categories. So only morphisms which are simultaneously affine and proper can be handled.

Philosophically the strongest possible approach would be to remove the properness condition and the existence of the classical generator from the definition of the NSSI schemes (Definition~\ref{def: nssi}) and prove this stronger property for abelian varieties. Then the careless formulation of Theorem~\ref{thm: main theorem} from the previous version of the paper would also be true. The problem is, I do not know how to prove the suggested unrestricted version of the NSSI property for any scheme which is not affine.

(I've figured out the mistake above some months ago, but procrastinated over issuing a corrected version of the paper until one of my colleagues have demonstrated that a sufficiently modern LLM can find the problem using the literal prompt "find mistakes in (arXiv link here)".)

Additionally, the previous version of the paper was too cavalier with the notion of a category proper over $\Perf(Y)$ for a scheme $Y$. A proper morphism $f\colon X \to Y$ does not, in general, realize $\Perf(X)$ as a $\Perf(Y)$-linear category proper over $Y$; this only works when the morphism is also perfect, i.e., when the pushforward of a perfect complex is perfect. For example, if $p \in Y$ is a singular point, then the inclusion $\{ p \} \monoarrow Y$ is a proper morphism, but the induced $\Perf(Y)$-linear structure on $\Dbcoh(\{ p \})$ is not proper over $Y$. When $Y$ is regular, no problems arise since any bounded object with coherent cohomology is perfect. As such, the proof of Theorem~\ref{thm: family of nssi varieties} is only valid when the base NSSI scheme $B$ is assumed to be regular.

\end{document}